\def\Im{\mathop{\rm Im}\nolimits}
\def\Re{\mathop{\rm Re}\nolimits}
\def\supp{\mathop{\rm supp}\nolimits}
\def\ds{\displaystyle}
\def\op{\mathop{\rm op}\nolimits}
\def\R{\mathbb R}
\def\N{\mathbb N}
\def\3|{\|\hspace*{-1pt}|}
\newcommand{\Osc}{\mathcal{O}}
\newcommand{\B}{\mathcal{B}}
\newcommand{\afrac}[2]{\genfrac{}{}{0pt}{1}{#1}{#2}}
\newcommand{\beqsn}{\arraycolsep1.5pt\begin{eqnarray*}}
\newcommand{\eeqsn}{\end{eqnarray*}\arraycolsep5pt}
\newcommand{\beqs}{\arraycolsep1.5pt\begin{eqnarray}}
\newcommand{\eeqs}{\end{eqnarray}\arraycolsep5pt}
\newcommand*{\dbar}{\mbox{\dj}}	
\newtheorem{Th}{Theorem}[section]
\newtheorem{Rem}[Th]{Remark}
\newtheorem{Lemma}[Th]{Lemma}
\newtheorem{Prop}[Th]{Proposition}
\renewcommand{\section}%
   {\setcounter{equation}{0}\@startsection {section}{1}{\z@}{-3.5ex plus -1ex
  minus -.2ex}{2.3ex plus .2ex}{\Large\bf}}
\title{A Necessary condition for $H^\infty $ well-posedness of 
$p$-evolution equations}
\author[Ascanelli]{A.~Ascanelli}
\address{Alessia Ascanelli\\
Dipartimento di Matematica e Informatica\\Universit\`a di Ferrara\\
Via Machiavelli n.~30\\
44121 Ferrara\\
Italy}
\email{alessia.ascanelli@unife.it}
\author[Boiti]{C.~Boiti}
\address{Chiara Boiti\\
Dipartimento di Matematica e Informatica \\Universit\`a di Ferrara\\
Via Ma\-chia\-vel\-li n.~30\\
44121 Ferrara\\
Italy}
\email{chiara.boiti@unife.it}
\author[Zanghirati]{L.~Zanghirati}
\address{Luisa Zanghirati\\
Dipartimento di Matematica e Informatica \\Universit\`a di Ferrara\\
Via Ma\-chia\-vel\-li n.~30\\
44121 Ferrara\\
Italy}
\email{zan@unife.it}
\begin{document}

\keywords{$p$-evolution equations, well-posedness in Sobolev spaces,
pseudo-differential operators}
\subjclass[2000]{Primary 35G10; Secondary 35A27}

\begin{abstract}
We consider $p$-evolution equations, for $p\geq2$, with complex valued
coefficients. We prove that a necessary condition for $H^\infty$ well-posedness
of
the associated Cauchy problem is that
the imaginary part of the coefficient of the subprincipal part
(in the sense of Petrowski) satisfies a decay estimate as $|x|\to+\infty$.
\end{abstract}

\maketitle
\markboth{\sc A necessary condition for
$H^\infty$ well-posedness \ldots}{\sc A.~Ascanelli, C.~Boiti, L.~Zanghirati}

\section{Introduction and main result}

Given an integer $p\geq 2$, we consider in $[0,T]\times\R$ the 
linear partial differential operator $P$ of the form
\begin{equation}\label{op}
P(t,x,D_t,D_x)=D_t+a_p(t)D_x^p+\sum_{j=0}^{p-1}
a_j(t,x)D_x^j\,,
\end{equation}
with $D=\frac 1i \partial$, $a_p\in C([0,T];\R)$ and
$a_j\in C([0,T];{\mathcal B}^\infty)$ for $0\leq j\leq p-1$, (here
${\mathcal B}^\infty={\mathcal B}^\infty(\R_x)$ is the space of complex 
valued functions which are bounded on $\R_x$ together with all their
$x$-derivatives). We are dealing with a non-kowalewskian evolution operator; 
anisotropic evolution operators of the form (\ref{op}) are usually called 
$p-$evolution operators. The condition that $a_p$ is real valued means that 
the principal symbol 
(in the sense of Petrowski)
of $P$ has the real characteristic $\tau=-a_p(t)\xi^p$; by the Lax-Mizohata 
theorem (cf. \cite{M}), this is a necessary condition to have a unique 
solution, in Sobolev spaces, of the Cauchy problem 
\beqs
\label{CP1}
\begin{cases}
P(t,x,D_t,D_x)u(t,x)=f(t,x) & (t,x)\in[0,T]\times\R\cr
u(0,x)=g(x) & x\in\R,
\end{cases}
\eeqs
in a neighborhood of $t=0$.
We notice that for
$p=2$ the operator is of Schr\"odinger type, for $p=3$ we have the 
same principal part as the Korteweg-De Vries equation.
Many 
results of well-posedness 
in Sobolev spaces of \eqref{CP1}
are available under the assumption that all the coefficients 
$a_j$ of \eqref{op} are real (see, for instance,
\cite{Ag1}, \cite{Ag2}, \cite{AZ}, 
\cite{AC}, \cite{CC2}, \cite{CHR}). On the contrary, when the coefficients 
$a_j(t,x)$ for $1\leq j\leq p-1$ 
are not real, the theory is well developed only in the case $p=2$: we 
know from the pioneering papers 
\cite{I1}, \cite{I2} that a decay condition as $|x|\to+\infty$ on
$\Im a_1$ is necessary and sufficient for well-posedness of the 
Cauchy problem \eqref{CP1} in $H^\infty$.
A necessary condition is given also in \cite{CR3} for $p=2$.
Sufficient conditions for well-posedness in $H^\infty$ and/or Gevrey classes
for 2 or 
$3-$evolution equations have been given by many authors (see, for instance, 
\cite{D}, \cite{T}, \cite{B}, \cite{KB}, \cite{ACC}, \cite{CR1}, 
\cite{CR2}, \cite{CC1}).
The general case $p\geq2$ has been recently considered in \cite{ABZ1}, proving
$H^\infty$ well-posedness of the Cauchy problem \eqref{CP1} under suitable
decay conditions, as $|x|\to+\infty$, on $\Im D_x^\beta a_j$, for
$j\leq p-1$ and $[\beta/2]\leq j-1$.
These results have been extended to the case of weighted Sobolev spaces in
\cite{ACpp}, to the case of first order systems of pseudo-differential
operators in \cite{AB1}, to the case of higher order equations
in \cite{AB2}, and to semi-linear
equations in \cite{ABKrakow}, \cite{ABZ2} for some model equations and in
\cite{ABsemi} for the general case.

As far as we know, there are no results available about necessary 
conditions for $H^\infty$ well-posedness for $p$-evolution equations, 
$p\geq 3$. 

In this paper we give a necessary condition 
for well-posedness of the
Cauchy problem \eqref{CP1} in $H^\infty$, generalizing to the case
$p\geq2$ the necessary condition given by Ichinose in \cite{I1} for
$p=2$.
More precisely, in \cite{I1} Ichinose considered, for $x\in\R^n$,
the operator 
\begin{equation}\label{pichi}
P=D_t-a_2\triangle_x+\sum_{j=1}^na_1^{(j)}(x)D_{x_j}+c(x),
\end{equation}
with $a_2\in(0,1]$ and $a_1^{(j)},c\in{\mathcal B}^\infty(\R^n)$.
He proved that a necessary condition for $H^\infty$ well-posedness
of the associated Cauchy problem
is the existence of non-negative constants $M,\ N$ such that
\begin{equation}
\label{ichi}
\sup_{x\in\R^n\!,\,\omega\in S^{n-1}}\left\vert\sum_{j=1}^n\int_0^\varrho 
\Im a_1^{(j)}(x+2a_2\theta\omega)\omega_jd\theta
\right\vert\leq M\log(1+\varrho)+N
\qquad\forall\varrho>0,
\end{equation}
where $S^{n-1}$ is the unit sphere in $\R^{n}$.
The same condition is also sufficient (cf. \cite{I2}) only in the 
case of space dimension $n=1$. 

In this paper we assume
that there exists a constant $m>0$ such that
\begin{equation}\label{a3}
|a_p(t)|\geq m\qquad \forall t\in [0,T]
\end{equation}
and prove the following:
\begin{Th}
\label{th1}
Let $P$ be the operator in \eqref{op} with $a_p\in C([0,T];\R)$
satisfying \eqref{a3} and $a_j\in C([0,T];\B^\infty)$ for $0\leq j\leq p-1$. 
A necessary 
condition for the Cauchy problem \eqref{CP1} to be well-posed in $H^\infty$ is 
the existence of constants $M,N>0$ such that:
\begin{equation}
\label{CN2}
\sup_{x\in\R}\min_{0\leq\tau\leq t\leq T}\int_{-\varrho}^\varrho 
\Im a_{p-1}(t,x+p a_p(\tau)\theta)d\theta\leq M\log(1+\varrho)+N,\qquad 
\forall \varrho>0.
\end{equation}
\end{Th}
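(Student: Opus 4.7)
The plan is to argue by contradiction in the style of Mizohata and Ichinose: assuming \eqref{CP1} is well-posed in $H^\infty$ yet that \eqref{CN2} fails, the goal is to construct a family of WKB approximate solutions whose norms contradict the a priori estimate forced by well-posedness. By the closed graph theorem, $H^\infty$ well-posedness yields, for every $s\in\R$, constants $s'\ge s$, $T_0\in(0,T]$ and $C>0$ such that
\[
\|u(t)\|_s\le C\Bigl(\|g\|_{s'}+\int_0^t\|Pu(\sigma)\|_{s'}\,d\sigma\Bigr),\qquad 0\le t\le T_0,
\]
so against data of frequency $\xi$ the right-hand side grows at most polynomially in $\xi$; the aim is to exhibit solutions $u_\xi$ whose norm at time $t$ grows faster than any polynomial in $\xi$.

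The approximate solutions arise from the WKB ansatz $u_\xi(t,x)=w(t,x)\,e^{i\xi x-i\xi^p\int_0^t a_p(\sigma)d\sigma}$, with $\xi\to+\infty$. The phase annihilates the $\xi^p$-eikonal, and using the identity $D_x^j(w\,e^{i\varphi})=e^{i\varphi}(\xi-iD_x)^j w$ to expand $Pu_\xi$ and collecting the $\xi^{p-1}$-terms gives the transport equation
\[
\partial_t w+p\,a_p(t)\,\xi^{p-1}\partial_x w+i\,a_{p-1}(t,x)\,\xi^{p-1}\,w=0,
\]
whose integration along the characteristic $\gamma(\sigma;x_0,\tau)=x_0+p\xi^{p-1}\int_\tau^\sigma a_p(r)dr$ issuing from $(x_0,\tau)$ yields
\[
|w(t,\gamma(t;x_0,\tau))|=|w(\tau,x_0)|\,\exp\!\Bigl(\xi^{p-1}\!\!\int_\tau^t\Im a_{p-1}\bigl(\sigma,\gamma(\sigma;x_0,\tau)\bigr)d\sigma\Bigr).
\]
Taking $w(\tau,\cdot)$ to be a narrow Gaussian wave packet concentrated at $x_0$, standard energy and Duhamel estimates upgrade $u_\xi$ to a bona fide solution of $Pu=f_\xi$ with both $\|u_\xi(\tau)\|_{s'}$ and $\|f_\xi\|_{s'}$ polynomial in $\xi$.

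The heart of the argument is the conversion of the characteristic integral in the exponent into the spatial integral appearing in \eqref{CN2}. Performing the change of variable $\theta=\xi^{p-1}(\sigma-\tau)$, freezing $a_p(r)\simeq a_p(\tau)$ on $[\tau,\sigma]$ (so that $\gamma(\sigma)\simeq x_0+p\,a_p(\tau)\theta$) and $\Im a_{p-1}(\sigma,\cdot)\simeq\Im a_{p-1}(t,\cdot)$ (using the continuity of $a_{p-1}$ in time) leads, for a suitably chosen pair $(\tau,t)$, to
\[
\xi^{p-1}\!\!\int_\tau^t\Im a_{p-1}\bigl(\sigma,\gamma(\sigma)\bigr)d\sigma\;\approx\;\int_{-\varrho}^{\varrho}\Im a_{p-1}\bigl(t,x_0+p\,a_p(\tau)\theta\bigr)d\theta,
\]
with $\varrho$ of order $\xi^{p-1}(t-\tau)$ and a uniform $O(1)$ error. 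The freedom to choose $(\tau,t)\in\{0\le\tau\le t\le T\}$ reproduces exactly the $\min$ in \eqref{CN2}: if \eqref{CN2} fails, the right-hand side can be driven past $M\log\xi+N$ for every $M$ by tuning $x_0$, $(\tau,t)$ and $\xi$, forcing $\|u_\xi(t)\|_s\gtrsim\xi^M$ and contradicting the polynomial a priori estimate. The main obstacle is exactly this freezing-and-substitution step: the $O(1)$ error must be uniform in $\xi$, which requires a careful joint tuning of the time scale $\delta=t-\tau$, the size $\varrho\sim\xi^{p-1}\delta$ and the wave-packet width, so that the errors $\omega_p(\delta)\varrho^2$ and $\omega_{p-1}(\delta)\varrho$ arising from the moduli of continuity of $a_p$ and $a_{p-1}$ remain bounded while $\varrho\to+\infty$, together with control of the $\xi^{p-2}$-remainder in the WKB hierarchy through the Duhamel formula for the full equation $Pu=f$.
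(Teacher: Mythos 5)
Your overall strategy (contradiction, high--frequency data concentrated near a point $x_k$ where the negation of \eqref{CN2} gives a large integral, growth governed by $\int\Im a_{p-1}$ along the bicharacteristics of $a_p(t)\xi^p$, and the change of variables $\theta=\xi^{p-1}(\sigma-\tau)$ converting the characteristic integral into the spatial integral of \eqref{CN2}) is the right philosophy and matches the skeleton of the paper. But there is a genuine gap at the step ``standard energy and Duhamel estimates upgrade $u_\xi$ to a bona fide solution'': the leading--order WKB remainder cannot be absorbed. Writing $Pu_\xi=e^{i\varphi}R$ with $R=O(\xi^{p-2})\,(D_x^2w+\dots)$, the exact solution $u$ with data $u_\xi(\tau)$ satisfies, by the a priori estimate \eqref{q} (which comes with an \emph{uncontrolled} loss of $q$ derivatives),
\begin{equation*}
\|u(t)-u_\xi(t)\|_0\;\le\;C\int_\tau^t\|Pu_\xi(\sigma)\|_q\,d\sigma\;\lesssim\;(t-\tau)\,\xi^{q+p-2}\sup_{\tau\le\sigma\le t}\|w(\sigma)\|\;=\;\varrho\,\xi^{q-1}\sup_{\tau\le\sigma\le t}\|w(\sigma)\|,
\end{equation*}
since $t-\tau=\varrho\,\xi^{1-p}$. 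The main term you want to keep is only $\|w(t)\|\approx e^{\xi^{p-1}\int\Im a_{p-1}}\approx(1+\varrho)^M$, so for any $q\ge1$ the error dominates the main term and the contradiction with $\|u(t)\|_0\le C\xi^q$ evaporates. (A secondary instance of the same problem: the error involves $\sup_\sigma\|w(\sigma)\|$ at intermediate times, which without the normalization of the paper's Lemma~2.2(iii) --- shifting $x_k$ to the minimum point of the partial integrals --- can be far larger than $\|w(t)\|$.) To close the argument along your lines you would need a WKB hierarchy $w_0+\xi^{-1}w_1+\dots+\xi^{-N}w_N$ of depth $N>q$, together with growth control of the correctors, whose transport equations involve derivatives of $a_{p-1}$.

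This is precisely why the paper does not use approximate solutions at all: it takes the \emph{exact} solutions $u_k$ of \eqref{CP1} with data $g_k$ and localizes them in phase space by a pseudo-differential operator $W_{n,k}$ whose symbol is transported by the Hamiltonian flow of $a_p(t)\xi^p$, so that the ``source term'' $f_k=[P,W_{n,k}]u_k$ consists only of commutators. Those are estimated by the norms of the other localized pieces $v_k^{\alpha,\beta}$ (carrying small prefactors $(\varrho_k^{\mu}/n)^{\alpha+\beta}$) plus symbolic-calculus remainders that can be made $O((\varrho_k^{\mu+1}/n)^\nu n^{q+p-1})$ with $\nu$ a \emph{free} parameter; taking $\nu$ large beats the fixed loss $q$, which is exactly the degree of freedom your single-term Duhamel argument lacks. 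Your freezing step for $a_p$ and for the time argument of $a_{p-1}$, with errors $\omega_p(\delta)\varrho^2$ and $\omega_{p-1}(\delta)\varrho$ made $O(1)$ by shrinking $\delta$ (hence taking $\xi=(\varrho/\delta)^{1/(p-1)}$ a very large power of $\varrho$), is sound and is the analogue of the paper's use of the mean value theorem on the tiny interval $[0,\varrho_k/n^{p-1}]$; that part is not where the difficulty lies.
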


\begin{Rem}
\begin{em}
If the coefficient $a_p(t)$ vanishes at some point of the interval $[0,T]$, 
the well-posedness in $H^\infty$ of the Cauchy problem \eqref{CP1} 
may fail to be true also if the necessary condition 
\eqref{CN2} is satisfied (see \cite{CR3}, \cite{ABZ1}).
In \cite{ABZ1}, for instance, 
a Levi-type condition of the form
\beqs
\label{levi}
|\Im a_{p-1}(t,x)|\leq Ca_p(t)/\langle x\rangle
\eeqs
is needed to 
weaken \eqref{a3} into 
$a_p(t)\geq0$ for proving $H^\infty$ well-posedness of \eqref{CP1}.
Notice that condition \eqref{levi} is consistent with \eqref{CN2}.
In the present paper we focus 
on the fact 
that the dependence on space of the coefficient of the subprincipal part is 
allowed only accompanied by a decay condition at infinity.
\end{em}
\end{Rem}


\section{Idea of the proof and auxiliary tools.}
\label{sec2}
We prove Theorem \ref{th1} by contradiction, taking $f\equiv 0$ 
without any loss of generality.

We assume the Cauchy problem \eqref{CP1} to be well-posed, so that
for every $g\in H^\infty$, there exists a unique 
$u\in C([0,T];H^\infty)$
solution of \eqref{CP1} and there exists $q\in\N_0:=\N\cup\{0\}$ 
and $C>0$ such that
\begin{equation}\label{q}
\|u(t,\cdot)\|_0\leq C\|g\|_q\qquad\forall t\in [0,T],
\end{equation}
where $\|\cdot\|_s$ stands for the norm in the Sobolev space $H^s$
(we shall write $\|\cdot\|:=\|\cdot\|_0$ for simplicity).

Then we assume, by contradiction, that \eqref{CN2} does not hold.
This implies that, for every $M>0$ and $k\in\N$ there exist a sequence
of points $x_k\in\R$ and a sequence $\varrho_k\to+\infty$ such that
\beqs
\label{-roro}
\int_{-\varrho_k}^{\varrho_k}\Im a_{p-1}(t,x_k+pa_p(\tau)\theta)d\theta\geq
M\log(1+\varrho_k)+k\qquad\forall 0\leq\tau\leq t\leq T.
\eeqs
We can then construct a sequence of initial data $g_k$ localized at high 
frequency $n_k:=\varrho_k^a$,
for suitable $a>0$, so
obtaining a sequence $u_k$ of solutions of the corresponding Cauchy 
problem. Further localizing these solutions 
in the phase space along the trajectory of 
the hamiltonian $a_p(t)\xi^p$, we produce a sequence of functions 
$v_k^{\alpha,\beta}$ (for $\alpha,\beta\in\N_0$) satisfying some energy 
estimates, because of \eqref{q}.

Taking, finally, a suitable linear combination $\sigma_k(t)$ of the
$L^2$-norms $\|v_k^{\alpha,\beta}(t,\cdot)\|$, we obtain, in Section~\ref{sec3},
that \eqref{-roro} implies an estimate from below of $\sigma_k(t)$; 
this estimate will contradict an estimate from above 
for $\sigma_k(t)$ which is stated and proved in Section~\ref{sec4}.

In this section we discuss condition \eqref{CN2}, construct the
sequence $\{v_k^{\alpha,\beta}\}$ and collect some estimates that will be crucial
in the proofs of the contradictory estimates from below 
and from above of $\sigma_k(t)$.

The next section is completely devoted to the proof of the estimate from 
below \eqref{formulabelow}.

In Section~\ref{sec4} we give the estimate from above 
\eqref{formulaabove}, and finally prove Theorem \ref{th1}.

Let us start by remarking that if condition \eqref{CN2} does not hold, then
at least one of the following two conditions does not hold:
\beqs
\label{A}
\sup_{x\in\R}\min_{0\leq \tau\leq t\leq T}\int_0^\varrho 
\Im a_{p-1}(t,x+p a_p(\tau)\theta)d\theta\leq M\log(1+\varrho)+N,\qquad 
\forall \varrho>0,
\eeqs
or
\beqs
\label{B}
\sup_{x\in\R}\min_{0\leq \tau\leq t\leq T}\int_{-\varrho}^0
\Im a_{p-1}(t,x+p a_p(\tau)\theta)d\theta\leq M\log(1+\varrho)+N,\qquad 
\forall \varrho>0.
\eeqs
Since
\beqsn
\int_{-\varrho}^0
\Im a_{p-1}(t,x+p a_p(\tau)\theta)d\theta=
\int_0^\varrho\Im a_{p-1}(t,x-p a_p(\tau)\theta)d\theta,
\eeqsn
we can assume, without any loss of generality, 
that \eqref{A} does not hold and obtain then a contradiction 
(if \eqref{B} does not hold we argue in the same way taking $-a_p$ instead
of $a_p$).

The following lemma will be the key to obtain the desired estimate from below
\eqref{formulabelow}:
\begin{Lemma}
\label{lemma1}
If \eqref{A} does not hold, then for every $M>0$ and $k\in\N$ there
esist $x_k\in\R$ and $\varrho_k>0$ such that:
\begin{itemize}
\item[(i)]
$\varrho_k\to+\infty$;
\item[(ii)]
$\ds\int_0^{\varrho_k}\Im a_{p-1}(t,x_k+p a_p(\tau)\theta)d\theta\geq 
M\log(1+\varrho_k)+k\qquad\forall 0\leq\tau\leq t\leq T$;
\item[(iii)]
$\ds\int_0^\varrho\Im a_{p-1}(t,x_k+p a_p(\tau)\theta)d\theta\geq0\qquad\forall 
\varrho\in[0,\varrho_k],\ 0\leq\tau\leq t\leq T$.
\end{itemize}
\end{Lemma}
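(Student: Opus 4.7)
Applying the negation of \eqref{A} with $N := k$, for each fixed $M > 0$ and each $k \in \N$ there exist $\tilde x_k \in \R$ and $\tilde\varrho_k > 0$ with
\begin{equation*}
\min_{0 \leq \tau \leq t \leq T} \int_0^{\tilde\varrho_k} \Im a_{p-1}(t, \tilde x_k + pa_p(\tau)\theta)\, d\theta > M\log(1 + \tilde\varrho_k) + k,
\end{equation*}
which already yields (ii) for the pair $(\tilde x_k, \tilde\varrho_k)$. Property (i) is then automatic: since $a_{p-1} \in C([0,T]; \B^\infty)$, the function $\Im a_{p-1}$ is uniformly bounded by some $C > 0$, so the left-hand side is at most $C\tilde\varrho_k$; comparing with the right-hand side, which is at least $k$, forces $\tilde\varrho_k \geq k/C$, hence $\tilde\varrho_k \to +\infty$.

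The heart of the argument is upgrading $(\tilde x_k, \tilde\varrho_k)$ to a pair $(x_k, \varrho_k)$ for which the running integrals remain non-negative. The plan is to use the substitution $y = \tilde x_k + pa_p(\tau)\theta$ to rewrite each running integral as $\frac{1}{pa_p(\tau)}\bigl[F_t\bigl(\tilde x_k + pa_p(\tau)\varrho\bigr) - F_t(\tilde x_k)\bigr]$ with $F_t(y) := \int_{\tilde x_k}^y \Im a_{p-1}(t, s)\, ds$, then shift the base point by setting $x_k := \tilde x_k + s$ for a suitable $s \geq 0$ chosen to push the starting position past any ``dip'' of the family $\{F_t\}_{t \in [0,T]}$ on the relevant $y$-interval, and correspondingly shrink $\varrho_k \leq \tilde\varrho_k$. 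Because $\Im a_{p-1}$ is bounded, both the required shift $s$ and the consequent loss on the right-hand side of (ii) are controlled, and the $+k$ buffer in the strict inequality above absorbs that loss for $k$ large enough, so that (ii) survives in its non-strict form.

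The main obstacle is the uniformity demanded in (iii) over the pair $(t,\tau)$. Since the drift $c(\tau) := pa_p(\tau)$ depends on $\tau$, a shift of the base point by a fixed amount $s$ induces \emph{different} shifts in the $\theta$-variable for different values of $\tau$, and different $t$'s produce different minimizers of $F_t$; one cannot simultaneously realize each $F_t$ at its own minimum over its own $y$-interval. Overcoming this relies on the continuity of $a_p$, the lower bound $|a_p| \geq m > 0$ from \eqref{a3} (which confines $c(\tau)$ to a compact interval bounded away from zero), the compactness of $[0,T]$, and the large excess in (ii), to guarantee that a single well-chosen $x_k$ simultaneously clears all the relevant dips while preserving both (ii) and (iii).
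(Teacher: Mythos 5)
Your reduction to a preliminary pair $(\tilde x_k,\tilde\varrho_k)$ via the negation of \eqref{A} with $N=k$, and your derivation of (i) from the boundedness of $\Im a_{p-1}$, coincide with the paper's steps. The gap is in (iii), which is the only nontrivial part of the lemma, and your plan stops exactly where the work begins. Two specific problems. First, the quantitative claim underpinning your version of the shift is false: boundedness of $\Im a_{p-1}$ does \emph{not} control the size of the required shift nor the loss it causes in (ii). The minimum of the running integral $F_k(s)=\int_0^s\Im a_{p-1}(t,y_k+p\theta a_p(\tau))\,d\theta$ over $[0,\tilde\varrho_k]$ can sit at distance comparable to $\tilde\varrho_k$ from the origin, and $F_k$ can be as negative as $-C\tilde\varrho_k$ there; neither quantity is absorbed by the buffer $+k$, since $\tilde\varrho_k\gtrsim k$ and may be far larger. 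The mechanism that actually works (and is what the paper uses) is to shift to the \emph{global minimum point} $s_k$ of $F_k$: then (iii) is immediate from minimality, and (ii) incurs \emph{no loss at all} but rather improves, because $F_k(s_k)\le F_k(0)=0$ gives $F_k(\delta_k)-F_k(s_k)\ge F_k(\delta_k)\ge M\log(1+\delta_k)+k\ge M\log(1+\varrho_k)+k$ with $\varrho_k=\delta_k-s_k$. Your proposal, by contrast, envisions a shift whose cost must be ``absorbed'' by $+k$; that accounting does not close.

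Second, the issue you correctly single out as ``the main obstacle'' --- that the minimizing shift depends on $(t,\tau)$, so a single $x_k$ must be produced --- is never resolved: you list the available hypotheses (continuity of $a_p$, \eqref{a3}, compactness of $[0,T]$, the excess in (ii)) and assert that they ``guarantee'' a uniform choice, but give no construction, and the excess cannot do the job since, as noted, the dips can have depth of order $\varrho_k\gg M\log(1+\varrho_k)+k$. For comparison, the paper's written proof takes $s_k$ to be the minimum point of the $(t,\tau)$-parametrized $F_k$ and sets $x_k=y_k+ps_ka_p(\tau)$, so the uniformity question you raise is indeed a delicate point of the argument; but identifying a difficulty is not the same as overcoming it, and as it stands your proof of (iii) is missing.
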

\begin{proof}
If \eqref{A} fails to be true, then for every $M>0$, 
$k\in\N$  there exist
$y_k\in\R$ and $\delta_k>0$ such that for all $0\leq\tau\leq t\leq T$
\beqs
\label{deltak}
\int_0^{\delta_k}\Im a_{p-1}(t,y_k+p\theta a_p(\tau))
d\theta\geq M\log(1+\delta_k)+k.
\eeqs
Let us set, for $s\in[0,\delta_k]$,
\beqsn
F_k(s):=\int_0^s\Im a_{p-1}(t,y_k+p\theta a_p(\tau))d\theta
\eeqsn
and let $s_k$ be point of minimum of $F_k$ on $[0,\delta_k]$. Define then
\beqsn
x_k:=&&y_k+ps_ka_p(\tau)\\
\varrho_k:=&&\delta_k-s_k.
\eeqsn
Remark that all $y_k,\delta_k,s_k,x_k,\varrho_k$ depend also on $M$.

For all $s\in[0,\varrho_k]\subseteq[0,\delta_k]$:
\beqs
\nonumber
\int_0^s\Im a_{p-1}(t,x_k+p a_p(\tau)\theta)d\theta
=&&\int_{s_k}^{s+s_k}\Im a_{p-1}(t,y_k+p a_p(\tau)\theta')d\theta'\\
\label{2star}
=&&F_k(s+s_k)-F_k(s_k)\geq0
\eeqs
by definition of $s_k$. This proves
$(iii)$.

Moreover, $F_k(s_k)\leq F_k(0)=0$ and hence, from \eqref{2star} 
and \eqref{deltak}:
\beqsn
\int_0^{\varrho_k}\Im a_{p-1}(t,x_k+p a_p(\tau)\theta)d\theta=&&
\int_0^{\delta_k-s_k}\Im a_{p-1}(t,x_k+p a_p(\tau)\theta)d\theta\\
=&&F_k(\delta_k)-F_k(s_k)\geq F_k(\delta_k)\\
\geq&&M\log(1+\delta_k)+k\geq M\log(1+\varrho_k)+k,
\eeqsn
proving $(ii)$.

Finally, the last inequality implies, for $k\to+\infty$,
\beqsn
\int_0^{\varrho_k}\Im a_{p-1}(t,x_k+p a_p(\tau)\theta)d\theta\geq k\to+\infty
\eeqsn
and hence $\varrho_k\to+\infty$, because $a_{p-1}\in\B^\infty$.
\end{proof}

\subsection{Solutions with high frequency initial data}
Let us fix, here and throughout all the paper, a cut-off 
function $h\in C^\infty(\R)$, such that 
\begin{equation}\label{h}
h(y)=\left\{\begin{array}{ll}
1 & |y|\leq 1/4
\\
0 & |y|\geq 1/2,
\end{array}\right.
\end{equation}
and a rapidly decreasing function $\psi$ such that $\psi(0)=2$ and 
$$\supp \hat\psi\subseteq\{\xi\in\R:\  h(\xi)=1\}.$$
Define then
\begin{equation}\label{gk}
g_k(x)=e^{i(x-x_k)n}\psi(x-x_k), 
\end{equation}
where 
\begin{equation}\label{n}
n:=\varrho_k^{a}
\end{equation}
for some $a>0$ to be chosen later on (see \eqref{sceltamu}), 
and $x_k, \varrho_k$ as in Lemma \ref{lemma1}.
Note that
\beqs
\label{fgk}
\hat{g}_k(\xi)=e^{-ix_k\xi}\hat{\psi}(\xi-n),
\eeqs
so ${g}_k$ is localized in the phase space around the point $(x_k,n)$.

Denote by $u_k\in C([0,T]; H^\infty)$
the solution of the Cauchy problem  
\beqs
\label{CP2}
\begin{cases}
P(t,x,D_t,D_x)u_k(t,x)=0 & (t,x)\in[0,T]\times\R\cr
u_k(0,x)=g_k(x) & x\in\R.
\end{cases}
\eeqs
Then, by \eqref{q} and \eqref{fgk} we have, for all $t\in [0,T]$:
\beqs\nonumber
 \|u_k(t,\cdot)\|\leq C\|g_k\|_q
&=&C(2\pi)^{-1/2}\left(\ds\int_{\R}
\langle\xi\rangle^{2q}|\hat g_k(\xi)|^2d\xi\right)^{1/2}
 \\
  \nonumber
 &\leq&C_q (2\pi)^{-1/2}\langle n\rangle^{q}\left(\ds\int_{\R}
\langle\theta\rangle^{2q}|\hat \psi(\theta)|^2d\theta\right)^{1/2}
\\
\label{C'T}
&\leq& C'_qn^q,
\eeqs
for some $C_q,C'_q>0$.

\subsection{A localizing operator}
In this subsection we define, by giving its symbol $w_{n,k}(t,x,\xi)$, 
a pseudo-differential operator $W_{n,k}(t,x,D_x)$
which localizes the 
solutions of \eqref{CP1} in the phase space along the trajectory of 
the hamiltonian $a_p(t)\xi^p$.

Let $w_{n,k}(t,x,\xi)$ be the solution of the Hamilton's equation of motion
\begin{equation}
\label{eqw}
\begin{cases}
\partial_tw_{n,k}=\left\{w_{n,k},-a_p(t)\xi^p\right\}
\\
w_{n,k}(0,x,\xi)=w_{0,n,k}(x,\xi):=\varrho_k^{1/2}h
\left(\varrho_k(x-x_k)\right)h\left(\varrho_k^\mu(\xi/n-1)\right),
\end{cases}
\end{equation}
with $\mu>0$ to be chosen later (see \eqref{sceltamu}),
where $\{\cdot,\cdot\}$ denotes the Poisson brackets defined by
\[\{p(x,\xi),q(x,\xi)\}=\partial_xp(x,\xi)\partial_\xi 
q(x,\xi)-\partial_\xi p(x,\xi)\partial_xq(x,\xi).\]
Computing the Poisson brackets, equation \eqref{eqw} reduces to
\begin{equation}
\label{215}
\begin{cases}
\left(\partial_t +pa_p(t)\xi^{p-1}\partial_x\right) w_{n,k}=0
\\
w_{n,k}(0,x,\xi)=w_{0,n,k}(x,\xi),
\end{cases}
\end{equation}
which admits the solution
\[
w_{n,k}(t,x,\xi)=w_{0,n,k}(x-pA_p(t)\xi^{p-1},\xi),\quad A_p(t)=
\int_0^t a_p(\tau)d\tau.
\]
We thus obtain
\begin{equation}
\label{defwnk}
w_{n,k}(t,x,\xi):=\varrho_k^{1/2}h
\left(\varrho_k(x-x_k-pA_p(t)\xi^{p-1})\right)
h\left(\varrho_k^\mu(\xi/n-1)\right).
\end{equation}
The following lemma shows that the symbol $w_{n,k}(t,x,\xi)$ is 
supported in a neighborhood of
the solution $(x_k+pA_p(t)n^{p-1},n)$ of 
the Hamilton's canonical equation with 
initial data $(x_k,n)$; moreover it introduces the sequence of symbols 
$w_{n,k}^{\alpha,\beta}$ which naturally appear in the computation of
$\partial_\xi^\alpha D_x^\beta w_{n,k}$.

\begin{Lemma}
\label{lemma2}
Let us define, for $\alpha,\beta\in\N_0$,
$\mu\geq2$ and $n$ as in \eqref{n},
the symbols
\begin{equation}
\label{wnkab}
w_{n,k}^{\alpha,\beta}(t,x,\xi):=\left.\varrho_k^{1/2}(\partial_x^\alpha h)(x)
(\partial_\xi^\beta h)(\xi)
\right\vert_{\afrac{x=\varrho_k(x-x_k-pA_p(t)\xi^{p-1})}{\hspace*{-12mm}\xi
=\varrho_k^\mu(\xi/n-1)}}.
\end{equation}
Then, for every 
$t\in\left[0,\ds\frac{\varrho_k}{n^{p-1}}\right]$ we have that
\beqsn
\supp w_{n,k}^{\alpha,\beta}(t)\subseteq\left\{(x,\xi):\ 
\vert x-(x_k+pA_p(t)n^{p-1})\vert\leq \frac{c_p}{\varrho_k},\ 
\vert\xi/n-1\vert\leq \frac{1}{2\varrho_k^\mu}\right\},
\eeqsn
for $c_p=\max\{1,p2^{p-1}\sup_{[0,T]}|a_p|\}$, if $k$ is large enough.
\end{Lemma}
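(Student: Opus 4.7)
The plan is to read off the two support conditions directly from the two factors of $h$ in the definition \eqref{wnkab}, noting that the $\xi$-bound is immediate while the $x$-bound requires shifting the argument $pA_p(t)\xi^{p-1}$ to $pA_p(t)n^{p-1}$, a step that costs an error controlled by the $\xi$-localization plus the time range.

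First I would fix $(x,\xi)\in\supp w_{n,k}^{\alpha,\beta}(t)$. From $\supp h\subseteq\{|y|\le1/2\}$ and the second factor of $h$ in \eqref{wnkab}, we get immediately
\[
\left|\frac{\xi}{n}-1\right|\le\frac{1}{2\varrho_k^\mu},
\]
which is the desired $\xi$-bound. In particular, writing $\xi=n(1+s)$ with $|s|\le1/(2\varrho_k^\mu)$, for $k$ large we have $|s|\le1$, hence by the binomial expansion
\[
|\xi^{p-1}-n^{p-1}|=n^{p-1}|(1+s)^{p-1}-1|\le n^{p-1}|s|\sum_{j=1}^{p-1}\binom{p-1}{j}\le \frac{2^{p-1}n^{p-1}}{2\varrho_k^\mu}=\frac{2^{p-2}n^{p-1}}{\varrho_k^\mu}.
\]

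Next I would use the first factor of $h$ to get $|x-x_k-pA_p(t)\xi^{p-1}|\le 1/(2\varrho_k)$. Then by the triangle inequality
\[
\left|x-(x_k+pA_p(t)n^{p-1})\right|\le\left|x-x_k-pA_p(t)\xi^{p-1}\right|+p|A_p(t)|\cdot|\xi^{p-1}-n^{p-1}|.
\]
The crucial step is estimating the second term using the time constraint $t\le\varrho_k/n^{p-1}$, which gives $|A_p(t)|\le\sup_{[0,T]}|a_p|\cdot\varrho_k/n^{p-1}$; combined with the binomial estimate above,
\[
p|A_p(t)|\cdot|\xi^{p-1}-n^{p-1}|\le p\sup_{[0,T]}|a_p|\cdot\frac{\varrho_k}{n^{p-1}}\cdot\frac{2^{p-2}n^{p-1}}{\varrho_k^\mu}=\frac{p\,2^{p-2}\sup_{[0,T]}|a_p|}{\varrho_k^{\mu-1}}.
\]

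Here the assumption $\mu\ge2$ enters decisively: it is exactly what converts this factor $\varrho_k^{-(\mu-1)}$ into a bound of order $\varrho_k^{-1}$. Adding the two contributions yields
\[
\left|x-(x_k+pA_p(t)n^{p-1})\right|\le\frac{1}{2\varrho_k}+\frac{p\,2^{p-2}\sup_{[0,T]}|a_p|}{\varrho_k}\le\frac{c_p}{\varrho_k},
\]
with $c_p=\max\{1,p\,2^{p-1}\sup_{[0,T]}|a_p|\}$, since a short case distinction confirms $\tfrac12+p\,2^{p-2}\sup|a_p|\le c_p$ in both regimes $p\,2^{p-1}\sup|a_p|\gtrless1$. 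I expect the main (though still mild) obstacle to be the bookkeeping in this last chain of inequalities: one has to see that the time window $[0,\varrho_k/n^{p-1}]$ is chosen precisely so that the discrepancy $\xi^{p-1}-n^{p-1}$ — weighted by the integrated velocity $pA_p(t)$ — matches the spatial localization width $1/\varrho_k$, provided $\mu\ge2$. All other steps are routine consequences of the support properties of $h$.
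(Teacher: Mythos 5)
Your proof is correct and follows essentially the same route as the paper's: read the $\xi$-localization off the second $h$-factor, then use the triangle inequality together with $|A_p(t)|\le \sup_{[0,T]}|a_p|\cdot t\le \sup_{[0,T]}|a_p|\,\varrho_k/n^{p-1}$ and the bound $|\xi^{p-1}-n^{p-1}|\lesssim 2^{p-1}n^{p-1}/(2\varrho_k^{\mu})$ (the paper factors the difference telescopically with $\xi/n\le 2$, you use the binomial expansion — same constant). Your final case check that $\tfrac12+p2^{p-2}\sup|a_p|\le c_p$ is a correct, slightly more explicit version of the paper's concluding inequality.
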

\begin{proof}
The estimate $\vert\xi/n-1\vert\leq 1/(2\varrho_k^\mu)$ trivially 
follows by definition \eqref{wnkab} and by \eqref{h}. 
Moreover, \eqref{h} implies, for $t\in[0,\varrho_k/n^{p-1}]$, $\mu\geq2$
and $(x,\xi)\in\supp w_{n,k}^{\alpha,\beta}$:
\beqsn
\vert x-(x_k+pA_p(t)n^{p-1})\vert&\leq&\vert x-(x_k+pA_p(t)\xi^{p-1})
\vert+p|A_p(t)|n^{p-1}\left\vert \left(\frac{\xi}{n}\right)^{p-1}-1\right\vert
\\
&\leq&\frac 1{2\varrho_k}+p\sup_{[0,T]}|a_p| t n^{p-1}
\cdot\left|\frac\xi n-1\right|\cdot\left|\left(\frac{\xi}{n}\right)^{p-2}
+\left(\frac{\xi}{n}\right)^{p-3}+\ldots+1\right|\\
&\leq&\frac 1{2\varrho_k}+p\sup_{[0,T]}|a_p| \frac {\varrho_k}{2\varrho_k^\mu}
\left[2^{p-2}
+2^{p-3}+\ldots+1\right]\\
&\leq&\frac 1{2\varrho_k}+p\sup_{[0,T]}|a_p| \frac 1{2\varrho_k^{\mu-1}}2^{p-1}
\leq \frac{c_p}{\varrho_k},
\eeqsn
for $c_p=\max\{1,p2^{p-1}\sup_{[0,T]}|a_p|\}$,
since $\xi/n\leq|\xi/n-1|+1\leq1/(2\varrho_k^\mu)+1\leq2$ for $k$ 
large enough.
\end{proof}

As a consequence of Lemma \ref{lemma2}, 
we localize, in the phase space, the solution of \eqref{CP2}, 
defining 
\begin{equation}\label{vkab}
v_k^{\alpha,\beta}(t,x):=W_{n,k}^{\alpha,\beta}(t,x,D_x)u_k(t,x),
\end{equation}
where $W_{n,k}^{\alpha,\beta}(t,x,D_x)$ is the pseudo-differential 
operator with symbol $w_{n,k}^{\alpha,\beta}(t,x,\xi)$.
We shall denote, throughout all the paper, $W_{n,k}:=W_{n,k}^{0,0}(t,x,D_x)$ 
and  $v_k:=v_k^{0,0}$ for simplicity.
\subsection{Useful estimates}
In the next sections we need
estimates of the $L^2$-norms of the functions 
$v_k^{\alpha,\beta}$ and of both operators $W_{n,k}^{\alpha,\beta}$ 
and $[a_j,W_{n,k}]$ acting on $u_k$ . In this subsection we state and 
prove all these estimates. Proofs are quite technical, and the main 
tools for obtaining them, collected in
Appendix A, are the Calder\'on-Vaillancourt's Theorem \ref{thC}
and a skillful use of the expansion formula of the symbol of the 
product of two pseudo-differential operators (Theorems \ref{thA} and 
\ref{thB}). 
To avoid losing his train of thought, the reader can skip 
these estimates at a first reading,
passing directly to Section~\ref{sec3} and coming back to the estimates at 
the moment of their application. 

To estimate the $L^2$-norms of $v_k$ and of 
$v_k^{\alpha,\beta}$ we first need estimates of the semi-norms 
$|\cdot|^{0}_{\ell,\ell}$ of the
symbols $w_{n,k}^{\alpha,\beta}\in S^0_{0,0}$, defined in formula \eqref{semi000}
of the Appendix.
\begin{Lemma}
\label{lemmaAB1}
Let $n=\varrho_k^a$ with $a\geq \mu\geq 2$, and 
$t\in \left[0,\ds\frac{\varrho_k}{n^{p-1}}\right]$.
Then, for every $\alpha,\beta\in\N_0$ we have, for $k$ large enough:
\begin{itemize}
\item[(i)]
for every $\gamma,\sigma\in\N_0$ there exists a constant
$C_{\alpha,\beta,\gamma,\sigma}>0$ such that, for all 
$(t,x,\xi)\in[0,\frac{\varrho_k}{n^{p-1}}]\times\R^2$:
\beqsn
|\partial_\xi^\gamma\partial_x^\sigma w_{n,k}^{\alpha,\beta}(t,x,\xi)|
\leq C_{\alpha,\beta,\gamma,\sigma}\varrho_k^{\frac12+\sigma}
\left(\frac{\varrho_k^\mu}{n}\right)^\gamma;
\eeqsn
\item[(ii)]
for every $\ell\in\N$ there exists $C_{\alpha,\beta,\ell}>0$ such that
\beqsn
\left\vert w_{n,k}^{\alpha,\beta}\right\vert_{\ell,\ell}^{0}\leq 
C_{\alpha,\beta,\ell}\varrho_k^{\frac 12+\ell};
\eeqsn
\item[(iii)]
for every $h\in \N_0$ and $\nu,\ell\in\N$ there exists 
$C_{\alpha,\beta,\nu,\ell}>0$ such that
\beqs\label{AB5}
|\xi^h\partial_\xi^\nu w_{n,k}^{\alpha,\beta}|_{\ell,\ell}^0
\leq C_{\alpha,\beta,\nu,\ell}\,n^h\varrho_k^{\frac12+\ell}
\left(\frac{\varrho_k^\mu}{n}\right)^\nu.
\eeqs
\end{itemize}
\end{Lemma}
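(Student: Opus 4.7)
The plan is to prove all three estimates by direct differentiation, exploiting the product structure of the symbol and the time window $t\leq\varrho_k/n^{p-1}$. Write
\[
w_{n,k}^{\alpha,\beta}(t,x,\xi)=\varrho_k^{1/2}\,A(t,x,\xi)\,B(\xi),
\qquad A=h^{(\alpha)}(\varrho_k F),\ B=h^{(\beta)}\!\left(\varrho_k^\mu(\xi/n-1)\right),
\]
with $F(t,x,\xi)=x-x_k-pA_p(t)\xi^{p-1}$. Only $A$ depends on $x$; each $x$-derivative through the chain rule brings down one factor $\varrho_k$, so $\partial_x^\sigma w_{n,k}^{\alpha,\beta}=\varrho_k^{1/2+\sigma}h^{(\alpha+\sigma)}(\varrho_k F)\,B$ and all of the $\sigma$-dependence is already accounted for.

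For (i) I would then apply Leibniz to distribute $\partial_\xi^\gamma$ between $h^{(\alpha+\sigma)}(\varrho_k F)$ and $B$. Each $\xi$-derivative of $B$ costs exactly $\varrho_k^\mu/n$ (from the inner chain rule) times another bounded derivative of $h$. For $\xi$-derivatives of $h^{(\alpha+\sigma)}(\varrho_k F)$, I would expand by Fa\`a di Bruno: a typical term is $\varrho_k^{m}\bigl(\prod_{i=1}^m \partial_\xi^{j_i}F\bigr)\,h^{(\alpha+\sigma+m)}(\varrho_k F)$ with $\sum j_i=\gamma'$ and $1\leq m\leq\gamma'$. The key observation is that on the support of $A$ one has $|\xi|\sim n$ (from the second cut-off), and the time restriction gives $|A_p(t)|\leq t\sup_{[0,T]}|a_p|\leq C\varrho_k/n^{p-1}$, so $|\partial_\xi^{j_i}F|\lesssim \varrho_k\, n^{-j_i}$ for each $j_i\geq1$. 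Hence every such term is bounded by $\varrho_k^m(\varrho_k/n)^{\gamma'}\leq(\varrho_k^2/n)^{\gamma'}\leq(\varrho_k^\mu/n)^{\gamma'}$ using $\mu\geq2$. Summing the finitely many Leibniz contributions gives
\[
\bigl|\partial_\xi^\gamma\partial_x^\sigma w_{n,k}^{\alpha,\beta}\bigr|
\leq C_{\alpha,\beta,\gamma,\sigma}\,\varrho_k^{1/2+\sigma}\,(\varrho_k^\mu/n)^\gamma.
\]

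For (ii) I would simply take the supremum over $\gamma,\sigma\leq\ell$ in (i) and use the assumption $n=\varrho_k^a$ with $a\geq\mu$, which gives $\varrho_k^\mu/n=\varrho_k^{\mu-a}\leq1$; thus $(\varrho_k^\mu/n)^\gamma\leq1$ and the bound collapses to $C_{\alpha,\beta,\ell}\varrho_k^{1/2+\ell}$. For (iii) the additional factor $\xi^h$ is supported where $|\xi|\leq 2n$ (Lemma \ref{lemma2}), so $|\partial_\xi^j\xi^h|\leq C_{h,j}n^{h-j}$ for $j\leq h$ and vanishes for $j>h$. Expanding $\partial_\xi^\gamma\partial_x^\sigma(\xi^h\partial_\xi^\nu w_{n,k}^{\alpha,\beta})$ by Leibniz and applying (i) to $\partial_\xi^{\gamma-j+\nu}\partial_x^\sigma w_{n,k}^{\alpha,\beta}$, a typical term is
\[
C\,n^{h-j}\,\varrho_k^{1/2+\sigma}\,(\varrho_k^\mu/n)^{\gamma-j+\nu}.
\]
Dividing by the target $n^h\varrho_k^{1/2+\ell}(\varrho_k^\mu/n)^\nu$ produces $\varrho_k^{(\mu-a)\gamma-\mu j-\ell+\sigma}$, whose exponent is negative whenever $\gamma,\sigma\leq\ell$ since $a\geq\mu$, so the ratio is $\leq1$ and the desired bound follows.

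The main technical obstacle will be the Fa\`a di Bruno bookkeeping in (i): one must verify that in every product $\varrho_k^m\prod_i\partial_\xi^{j_i}F$ the \emph{worst} case $m=\gamma'$ still absorbs into $(\varrho_k^\mu/n)^{\gamma'}$, which is exactly where the hypothesis $\mu\geq2$ is used and where the time constraint $t\leq\varrho_k/n^{p-1}$ must be invoked to generate the decaying factor $\varrho_k/n$ at each $\xi$-differentiation of $F$. Once this is in place, (ii) and (iii) are essentially corollaries obtained by combining (i) with $n\geq\varrho_k^\mu$ and the elementary bound $|\partial_\xi^j\xi^h|\lesssim n^{h-j}$ on the support.
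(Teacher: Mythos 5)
Your proposal is correct and follows essentially the same route as the paper: the same reduction $\partial_x^\sigma w_{n,k}^{\alpha,\beta}=\varrho_k^\sigma w_{n,k}^{\alpha+\sigma,\beta}$, the same Leibniz split between the two cut-off factors with the bounds $(\varrho_k|A_p(t)||\xi|^{p-2})^{\gamma_1}\lesssim(\varrho_k^2/n)^{\gamma_1}\leq(\varrho_k^\mu/n)^{\gamma_1}$ coming from $|\xi|\leq 2n$ and $t\leq\varrho_k/n^{p-1}$, and the same deduction of (ii) and (iii) from (i) via $a\geq\mu$ and $|\partial_\xi^j\xi^h|\lesssim n^{h-j}$ on the support. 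Your Fa\`a di Bruno bookkeeping is in fact slightly more explicit than the paper's, and the one imprecision (attributing $|\xi|\sim n$ to the support of $A$ rather than of the second cut-off $B$) does not affect the argument.
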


\begin{proof}
Let us write
\beqs
\nonumber
\partial_\xi^\gamma\partial_x^\sigma w_{n,k}^{\alpha,\beta}(t,x,\xi)=&&
\varrho_k^\sigma\partial_\xi^\gamma w_{n,k}^{\alpha+\sigma,\beta}(t,x,\xi)\\
\label{CC1}
=&&
\varrho_k^{\sigma+\frac12}\sum_{\gamma_1+\gamma_2=\gamma}C_\gamma
\partial_\xi^{\gamma_1}h^{(\alpha+\sigma)}(\varrho_k(x-x_k-pA_p(t)\xi^{p-1}))\cdot
\partial_\xi^{\gamma_2}h^{(\beta)}(\varrho_k^\mu(\xi/n-1)).
\eeqs
Since $|\xi|\leq 2n$ on $\supp w_{n,k}^{\alpha,\beta}$, by Lemma \ref{lemma2} we 
have that
\beqsn
\left\vert \partial_\xi^{\gamma_1}h^{(\alpha+\sigma)}
\left(\varrho_k(x-x_k-pA_p(t)\xi^{p-1})\right)\right\vert&\leq& 
C_{\alpha,\sigma,\gamma_1}(A_p(t)|\xi|^{p-2}\varrho_k)^{\gamma_1}\\
&\leq& C_{\alpha,\sigma,\gamma_1}\left(\sup_{[0,T]}|a_p|\cdot t
\cdot|\xi|^{p-2}\varrho_k \right)^{\gamma_1}
\leq C'_{\alpha,\sigma,\gamma_1}\left(\frac{\varrho_k^2}{n}\right)^{\gamma_1}
\eeqsn
for $t\in[0,\varrho_k/n^{p-1}]$.
Moreover,
\beqsn
\left\vert \partial_\xi^{\gamma_2}h^{(\beta)}
\left(\varrho_k^\mu(\xi/n-1)\right)\right\vert&\leq& C_{\gamma_2,\beta}
\left(\frac{\varrho_k^\mu}{n}\right)^{\gamma_2}.
\eeqsn
Substituting in \eqref{CC1} we thus obtain $(i)$, since $\mu\geq2$.

From $(i)$ and $a\geq\mu$ we get
\beqsn
\left\vert w_{n,k}^{\alpha,\beta}\right\vert_{\ell,\ell}^{0}
=\sup_{\afrac{\gamma,\sigma\leq \ell}{x,\xi\in\R}}\left\vert 
\partial_\xi^\gamma\partial_x^\sigma w_{n,k}^{\alpha,\beta}(t,x,\xi)\right\vert
\leq 
\sup_{\afrac{\gamma,\sigma\leq \ell}{x,\xi\in\R}}C_{\alpha,\beta,\gamma,\sigma}\,
\varrho_k^{\frac 12+\sigma}\left(\frac{\varrho_k^\mu}{n}\right)^{\gamma}
\leq C_{\alpha,\beta,\ell}\,
\varrho_k^{\frac 12+\ell}
\eeqsn
i.e. also $(ii)$ is satisfied.

Finally, $(iii)$ follows from $(i)$, since $|\xi|\leq2n$ on $\supp
w_{n,k}^{\alpha,\beta}$ and $a\geq\mu$.
\end{proof}

We are now ready to estimate $\|v_k\|$. By Calder\'on-Vaillancourt's 
Theorem \ref{thC}, $(ii)$ of Lemma \ref{lemmaAB1} 
and \eqref{C'T}, we have that for all $t\in[0,\varrho_k/n^{p-1}]$
\beqs\label{nomme}
\|v_k(t,\cdot)\|&=&\|W_{n,k}^{0,0}(t,\cdot,D_x)u_k(t,\cdot)\|
\leq C\left\vert w_{n,k}(t,x,\xi)\right\vert_{2,2}^{0}\|u_k(t,\cdot)\|
\leq C'\varrho_k^{\frac 12+2}n^q
\eeqs
for some $C,C'>0$; similarly, for every $\alpha,\beta\in\N_0$, it follows that
\beqs\label{normevkab}
\|v_k^{\alpha,\beta}(t,\cdot)\|\leq C_{\alpha,\beta}\varrho_k^{\frac 12+2}n^q
=C_{\alpha,\beta}\varrho_k^{\frac 12+2+aq}\qquad\forall t\in[0,\varrho_k/n^{p-1}]
\eeqs
for some $C_{\alpha,\beta}>0$.
To estimate also the derivatives of $v_k^{\alpha,\beta}$ we need the following:

\begin{Lemma}
\label{lemmaAB3}
Let $n=\varrho_k^a$  with $a\geq\mu$.
For every $\nu,r\in\N$ and $\alpha,\beta\in\N_0$ there exists
$C_{\alpha,\beta,r,\nu}>0$ such that for all $t\in[0,\varrho_k/n^{p-1}]$ 
with $k$ large enough:
\beqsn
\|D_x^rv_k^{\alpha,\beta}(t,\cdot)\|\leq c_1n^r\|v_k^{\alpha,\beta}\|
+C_{\alpha,\beta,r,\nu}\varrho_k^{4+\frac12}
\left(\frac{\varrho_k^{\mu+1}}{n}\right)^\nu
n^{r+q},
\eeqsn
for a fixed constant $c_1>0$.\end{Lemma}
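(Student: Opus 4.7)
The plan is to decompose $D_x^r v_k^{\alpha,\beta}(t,\cdot) = D_x^r W_{n,k}^{\alpha,\beta}(t,x,D_x) u_k(t,\cdot)$ into a principal term yielding $n^r v_k^{\alpha,\beta}$ and an operator residual whose action on $u_k$ can be estimated in $L^2$ by Calderon--Vaillancourt together with the symbol bounds of Lemma \ref{lemmaAB1}. Since $D_x^r$ has the polynomial $\xi$-symbol $\xi^r$, its left composition with $W_{n,k}^{\alpha,\beta}$ is given by the \emph{finite} exact expansion
\[
D_x^r\,W_{n,k}^{\alpha,\beta} \;=\; \sum_{\gamma=0}^{r}\binom{r}{\gamma}\,\mathrm{Op}\bigl(\xi^{r-\gamma}\,D_x^\gamma w_{n,k}^{\alpha,\beta}\bigr),
\]
with $D_x^\gamma w_{n,k}^{\alpha,\beta} = (-i\varrho_k)^\gamma w_{n,k}^{\alpha+\gamma,\beta}$ obtained by direct differentiation of \eqref{wnkab}. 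Splitting $\xi^r = n^r + (\xi^r - n^r)$ in the $\gamma = 0$ summand isolates $n^r\,W_{n,k}^{\alpha,\beta}u_k = n^r v_k^{\alpha,\beta}$, so that $c_1$ can be taken equal to $1$ (or a slightly larger absolute constant absorbing cross-terms).

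The residual is a sum of operators with symbols of the form $\varrho_k^\gamma\,(\xi^{r-\gamma} - \delta_{\gamma 0}\,n^r)\,w_{n,k}^{\alpha+\gamma,\beta}$. On $\mathrm{supp}\,w_{n,k}^{\alpha+\gamma,\beta}$ one has $|\xi - n|\leq n/(2\varrho_k^\mu)$ by Lemma \ref{lemma2}, and Taylor-expanding $\xi^{r-\gamma} - \delta_{\gamma 0}n^r$ in powers of $\xi - n$ generates factors bounded pointwise by $(n/\varrho_k^\mu)^j$. I would then invoke the Calderon--Vaillancourt Theorem \ref{thC} with $\ell = 4$ to bound each residual operator norm by the corresponding semi-norm $|\cdot|^{0}_{4,4}$ of its symbol, estimated via parts (i) and (iii) of Lemma \ref{lemmaAB1}. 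The prefactor $\varrho_k^{4+1/2}$ in the statement arises from $\varrho_k^{1/2+4}$ in those semi-norm bounds, and combined with $\|u_k\|\leq C'_q\,n^q$ from \eqref{C'T} it produces the $n^{r+q}$ factor.

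The main obstacle is the factor $(\varrho_k^{\mu+1}/n)^\nu$ for \emph{arbitrary} $\nu\in\N$. The polynomial identity $\xi^r - n^r = \sum_{j=1}^{r}\binom{r}{j}n^{r-j}(\xi - n)^j$ terminates at $(\xi - n)^r$, so by itself it cannot yield an arbitrarily high power of the small quantity $\varrho_k^{\mu+1}/n$. The resolution is to apply the asymptotic-expansion formulas of Theorems \ref{thA} and \ref{thB} from the Appendix iteratively $\nu$ times: each step trades a $\partial_\xi$-derivative of the cutoff $h^{(\beta)}(\varrho_k^\mu(\xi/n - 1))$, which by Lemma \ref{lemmaAB1}(i) carries a factor $\varrho_k^\mu/n$, against a $D_x$-derivative of $h^{(\alpha)}(\varrho_k(x - x_k - pA_p(t)\xi^{p-1}))$, which carries a factor $\varrho_k$, yielding $\varrho_k^{\mu+1}/n$ per iteration. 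After $\nu$ iterations one collects exactly $(\varrho_k^{\mu+1}/n)^\nu$ at the cost of an $\nu$-dependent combinatorial constant, which is absorbed into $C_{\alpha,\beta,r,\nu}$.
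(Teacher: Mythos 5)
Your first two steps (the exact Leibniz decomposition of $D_x^r W_{n,k}^{\alpha,\beta}$ and the split $\xi^r=n^r+(\xi^r-n^r)$) are fine, and you correctly identify the real difficulty: producing the factor $(\varrho_k^{\mu+1}/n)^\nu$ for \emph{arbitrary} $\nu$. But your proposed resolution does not work. You cannot ``apply Theorems \ref{thA} and \ref{thB} iteratively $\nu$ times'' to the residual: those theorems expand the symbol of a \emph{composition} of two operators, whereas after your Leibniz step there is nothing left to compose — each residual piece is a single operator with a single symbol $\varrho_k^\gamma(\xi^{r-\gamma}-\delta_{\gamma 0}n^r)\,w_{n,k}^{\alpha+\gamma,\beta}$. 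Moreover, even where the composition formula does apply, it never ``trades'' derivatives for smallness in the way you describe: the expansion always contains the zeroth term $p_1p_2$ with no gain, and the higher terms are corrections, not replacements. Estimating your residual symbols directly by Calder\'on--Vaillancourt and Lemma \ref{lemmaAB1} gives at best a bound of order $n^{r+q}\varrho_k^{5/2-\mu}$, with a \emph{fixed} power of $\varrho_k$; for large $\nu$ this is far weaker than the claimed $\varrho_k^{4+\frac12}(\varrho_k^{\mu+1}/n)^\nu n^{r+q}$, and one can check that this weaker remainder is not summable against the constraints \eqref{sceltamu}, so the rest of the paper's argument (e.g.\ \eqref{sup}) would break down.

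The missing idea is an auxiliary frequency cutoff. The paper introduces $\chi_{1,k}(\xi)=h\bigl(\tfrac{\varrho_k^\mu}{3}(\xi/n-1)\bigr)$, whose support strictly contains the $\xi$-support of $w_{n,k}^{\alpha,\beta}$, writes $D_x^rv_k^{\alpha,\beta}=\chi_{1,k}(D_x)D_x^rv_k^{\alpha,\beta}+(1-\chi_{1,k}(D_x))D_x^rv_k^{\alpha,\beta}$, and bounds the first piece by $c_1n^r\|v_k^{\alpha,\beta}\|$ (keeping $\|v_k^{\alpha,\beta}\|$, not $\|u_k\|$, on the right). For the second piece it \emph{does} have a genuine composition, $(1-\chi_{1,k}(D_x))\circ W_{n,k}^{\alpha+j,\beta}D_x^{r-j}$, and here the key point is that $\supp(1-\chi_{1,k})\cap\supp w_{n,k}^{\alpha+j,\beta}=\emptyset$: every finite-order term $\partial_\xi^\gamma(1-\chi_{1,k})\,D_x^\gamma(w_{n,k}^{\alpha+j,\beta}\xi^{r-j})$ in the Theorem \ref{thA} expansion vanishes identically, for every truncation order $\nu$. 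Only the order-$\nu$ integral remainder survives, and it carries $\partial_\xi^\nu\chi_{1,k}\sim(\varrho_k^\mu/n)^\nu$ together with $D_x^\nu w_{n,k}^{\alpha+j,\beta}\sim\varrho_k^\nu$, which is exactly where the arbitrary power $(\varrho_k^{\mu+1}/n)^\nu$ comes from. Without this support-disjointness mechanism your argument cannot reach the stated estimate.
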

\begin{proof}
We define the function
\beqs\label{chi1k}
\chi_{1,k}(\xi)=h\left(\frac{\varrho_k^\mu}3\left(\frac\xi n-1\right)\right).
\eeqs
By definition \eqref{h}, we have that
\beqs
\label{suppchi1k}
\supp \chi_{1,k}\subseteq \Big\{\xi\ :\ 
\Big\vert\frac \xi n-1\Big\vert\leq \frac3{2\varrho_k^\mu}\Big\}
\subseteq \left\{\xi\ :\ \vert\xi\vert\leq 3n\right\},
\eeqs
and
\beqs
\label{suppchi1}
\supp \left(1-\chi_{1,k}\right)\subseteq \Big\{\xi\ :\ 
\Big\vert\frac\xi n -1\Big\vert\geq \frac{3}{4\varrho_k^\mu}\Big\}.
\eeqs
This implies, by Lemma \ref{lemma2}, that
\beqs
\label{intvuota}
\supp (1-\chi_{1,k})\cap\supp w_{n,k}^{\alpha,\beta}=\emptyset.
\eeqs
Localizing now at frequency $n$
\beqs
\nonumber
D_x^rv_k^{\alpha,\beta}&=&\chi_{1,k}(D_x)D_x^rv_k^{\alpha,\beta}
+(1-\chi_{1,k}(D_x))D_x^rv_k^{\alpha,\beta}\\
\nonumber
&=&\chi_{1,k}(D_x)D_x^rv_k^{\alpha,\beta}+\sum_{j=0}^r\binom rj
(1-\chi_{1,k}(D_x))(D_x^jW_{n,k}^{\alpha,\beta})
D_x^{r-j}u_k,
\eeqs
and applying Calder\'on-Vaillancourt's Theorem \ref{thC}, we come to:
\beqs
\nonumber
\|D_x^rv_k^{\alpha,\beta}(t,\cdot)\|
\leq&&|\chi_{1,k}(\xi)\xi^r|_{2,2}^0\cdot\|v_k^{\alpha,\beta}\|\\
\label{C3}
&&+\sum_{j=0}^r\binom rj\varrho_k^j
\left |\sigma\left((1-\chi_{1,k}(D_x))W_{n,k}^{\alpha+j,\beta}
D_x^{r-j}\right)\right |_{2,2}^0\cdot\|u_k\|.
\eeqs
Note that $|\chi_{1,k}(\xi)\xi^r|_{2,2}^0\leq c_1n^r$ for some $c_1>0$,
because of \eqref{suppchi1k}; 
to estimate the second term of \eqref{C3}, by Theorem \ref{thA}
and \eqref{intvuota} we write, for every integer $\nu\geq1$:
\beqs
\nonumber
\hspace*{10mm}
&&\sigma\left((1-\chi_{1,k}(D_x))W_{n,k}^{\alpha+j,\beta}D_x^{r-j}\right)=
\sum_{0\leq\gamma\leq \nu-1}\frac{1}{\gamma!}\partial_\xi^\gamma(1-\chi_{1,k}(\xi))
D_x^\gamma(w_{n,k}^{\alpha+j,\beta}\xi^{r-j})\\
\nonumber
&&+\int_0^1\frac{(1-\theta)^{\nu-1}}{(\nu-1)!}\int\!\!\int e^{-iy\eta}
\partial_\xi^\nu(1-\chi_{1,k}(\xi+\theta\eta)D_x^\nu(w_{n,k}^{\alpha+j,\beta}
(t,x+y;\xi)
\xi^{r-j})dy\dbar\eta d\theta\\
\label{C2}
&&=\int_0^1\frac{(1-\theta)^{\nu-1}}{(\nu-1)!}
\Osc_\nu(t,\theta,x,\xi)
 d\theta,
\eeqs
where 
\beqsn
\Osc_\nu(t,\theta,x,\xi):=\int\!\!\int e^{-iy\eta}
\partial_\xi^\nu(1-\chi_{1,k}(\xi+\theta\eta)D_x^\nu w_{n,k}^{\alpha+j,\beta}
(t,x+y;\xi)
\xi^{r-j}dy\dbar\eta.
\eeqsn

Writing
$
\xi^{r-j}=\sum_{h=0}^{r-j}\binom{r-j}{h}(\xi+\theta\eta)^h(-\theta\eta)^{r-j-h}
$
and $e^{-iy\eta}(-\eta)^{r-j-h}=D_y^{r-j-h}e^{-iy\eta}$, we have, integrating 
by parts:
\beqsn
\Osc_\nu=&&-\sum_{h=0}^{r-j}\binom{r-j}{h}\theta^{r-j-h}\int\!\!\int
\partial_\xi^\nu\chi_{1,k}(\xi+\theta\eta)\cdot(\xi+\theta\eta)^h
D_x^\nu w_{n,k}^{\alpha+j,\beta}(t,x+y;\xi)\\
&&\cdot D_y^{r-j-h}e^{-iy\eta}dy\dbar\eta\\
=&&\sum_{h=0}^{r-j}(-1)^{r-j-h+1}\binom{r-j}{h}\theta^{r-j-h}\int\!\!\int
e^{-iy\eta}\partial_\xi^\nu\chi_{1,k}(\xi+\theta\eta)\cdot(\xi+\theta\eta)^h\\
&&\cdot
D_y^{\nu+r-j-h}w_{n,k}^{\alpha+j,\beta}(t,x+y;\xi)dy\dbar\eta\\
=&&\sum_{h=0}^{r-j}(-1)^{r-j-h+1}\binom{r-j}{h}\theta^{r-j-h}\varrho_h^{\nu+r-j-h}
\int\!\!\int
e^{-iy\eta}\partial_\xi^\nu\chi_{1,k}(\xi+\theta\eta)\cdot(\xi+\theta\eta)^h\\
&&\cdot
w_{n,k}^{\alpha+\nu+r-h,\beta}(t,x+y;\xi)dy\dbar\eta.
\eeqsn

By Theorem \ref{thB}, \eqref{chi1k}, \eqref{suppchi1k} and
Lemma \ref{lemmaAB1}, for $\theta
\in[0,1]$ we have that
\beqsn
|\Osc_\nu(t,\theta)|_{2,2}^0\leq&&\sum_{h=0}^{r-j}c_h
\varrho_k^{\nu+r-j-h}|\partial_\xi^\nu
\chi_{1,k}(\xi)\xi^h|_{4,4}^0\cdot|w_{n,k}^{\alpha+\nu+r-h,\beta}(t,x;\xi)|_{4,4}^0\\
\leq&&\sum_{h=0}^{r-j}C_{\alpha,\nu,r,h,\beta}\varrho_k^{\nu+r-j-h}
\left(\frac{\varrho_k^\mu}{n}\right)^\nu n^h
\varrho_k^{\frac12+4}\\
\leq
&&\sum_{h=0}^{r-j}C'_{\alpha,\beta,\nu,r,h}\left(\frac{\varrho_k^\mu}{n}\right)^\nu
\left(\frac{n}{\varrho_k}\right)^{r-j}\varrho_k^{\nu+r-j+\frac12+4}\\
=&&C_{\alpha,\beta,\nu,r,j}\,
\varrho_k^{\frac12+4}\left(\frac{\varrho_k^{\mu+1}}{n}\right)^\nu
n^{r-j}
\eeqsn
for some $c_h,C_{\alpha,\nu,r,h,\beta},C'_{\alpha,\beta,\nu,r,h},
C_{\alpha,\beta,\nu,r,j}>0$, since $(n/\varrho_k)^h\leq(n/\varrho_k)^{r-j}$
for $0\leq h\leq r-j$.

Substituting in \eqref{C2} and integrating with respect to $\theta$ we 
thus have that
\beqs\label{AB6}
\left |\sigma\left((1-\chi_{1,k}(D_x))W_{n,k}^{\alpha+j,\beta}
D_x^{r-j}\right)\right |_{2,2}^0\leq|\Osc_\nu|_{2,2}^0
\leq C_{\alpha,\beta,\nu,r,j}\varrho_k^{4+\frac12}
\left(\frac{\varrho_k^{\mu+1}}{n}\right)^\nu n^{r-j}.
\eeqs

Substituting in \eqref{C3}, and taking into account \eqref{C'T}, we have that
\beqsn
\|D_x^rv_k^{\alpha,\beta}(t,\cdot)\|\leq&&
c_1n^r\|v_k^{\alpha,\beta}\|+\sum_{j=0}^r
C'_{\alpha,\beta,\nu,r,j}\varrho_k^{4+\frac12+j}
\left(\frac{\varrho_k^{\mu+1}}{n}\right)^\nu
n^{r-j+q}\\
\leq&&c_1n^r\|v_k^{\alpha,\beta}\|+
C_{\alpha,\beta,\nu,r}\varrho_k^{4+\frac12}
\left(\frac{\varrho_k^{\mu+1}}{n}\right)^\nu
n^{r+q}
\eeqsn
for some $C'_{\alpha,\beta,\nu,r,j},C_{\alpha,\beta,\nu,r}>0$,
since $\left(\frac{\varrho_k}{n}\right)^j\leq1$ for every $j$.
\end{proof}

The following two lemmas give estimates of some pseudo-differential operators 
acting on the  functions $u_k$. 

\begin{Lemma}
\label{lemmaAB2}
Let $n=\varrho_k^a$ with $a\geq\mu$.
Then for every $\sigma,\gamma,\lambda\in\N_0$ 
the operators 
$W_{n,k}^{\sigma,\gamma}(t,x,D_x)$ satisfy
\beqs\label{ind}
W_{n,k}^{\sigma,\gamma}D_x^\lambda=\ds\sum_{j=0}^\lambda c_j
\varrho_k^j D_x^{\lambda-j}W_{n,k}^{\sigma+j,\gamma},
\eeqs
for some $c_0,\ldots,c_\lambda>0$.
Moreover, there are constants $C_\lambda>0$ and, for all $\nu\in\N_0$,
$C_{\sigma,\gamma,\lambda,\nu}>0$ such that for all $t\in[0,\varrho_k/n^{p-1}]$
with $k$ large enough:
\beqs
\label{2.28}
\ \ \|W_{n,k}^{\sigma,\gamma}(t,\cdot,D_x)D_x^\lambda u_k(t,\cdot)\|\leq
C_\lambda \sum_{j=0}^\lambda \varrho_k^jn^{\lambda-j}
\|v_k^{\sigma+j,\gamma}\|+
C_{\sigma,\gamma,\lambda,\nu}\varrho_k^{4+\frac12}\left(\frac{\varrho_k^{\mu+1}}{n}
\right)^\nu n^{\lambda+q}.
\eeqs
\end{Lemma}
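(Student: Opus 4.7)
The plan is to prove the identity \eqref{ind} by induction on $\lambda$ and then deduce the norm bound \eqref{2.28} by applying the triangle inequality and Lemma~\ref{lemmaAB3} to each term.

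For \eqref{ind}, the key point is the commutator identity $[D_x,W_{n,k}^{\sigma,\gamma}]=\mathrm{Op}(D_xw_{n,k}^{\sigma,\gamma})$ (valid because $D_x=\mathrm{Op}(\xi)$ has $x$-independent symbol, so the composition is exact), combined with the fact that the $x$-dependence of $w_{n,k}^{\sigma,\gamma}$ enters only through the inner argument $\varrho_k(x-x_k-pA_p(t)\xi^{p-1})$. Differentiating the defining formula \eqref{wnkab} in $x$ yields $\partial_x w_{n,k}^{\sigma,\gamma}=\varrho_k w_{n,k}^{\sigma+1,\gamma}$, hence $[D_x,W_{n,k}^{\sigma,\gamma}]$ equals a nonzero constant times $\varrho_k W_{n,k}^{\sigma+1,\gamma}$. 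The case $\lambda=1$ therefore reads $W_{n,k}^{\sigma,\gamma}D_x=D_xW_{n,k}^{\sigma,\gamma}+c\,\varrho_kW_{n,k}^{\sigma+1,\gamma}$, and the general case follows by iterating $\lambda$ times and collecting terms, the coefficients $c_j$ being the expected binomial-type constants (up to powers of $i$ absorbed into the notation $c_j>0$).

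To obtain \eqref{2.28}, I apply both sides of \eqref{ind} to $u_k$, take $L^2$-norms, and use the triangle inequality to get
\[
\|W_{n,k}^{\sigma,\gamma}D_x^\lambda u_k(t,\cdot)\|\leq\sum_{j=0}^\lambda c_j\varrho_k^j\,\|D_x^{\lambda-j}v_k^{\sigma+j,\gamma}(t,\cdot)\|,
\]
where I used the definition \eqref{vkab} of $v_k^{\sigma+j,\gamma}$. Each term on the right is then estimated by Lemma~\ref{lemmaAB3} (with $r=\lambda-j$, $\alpha=\sigma+j$, $\beta=\gamma$), which produces a main contribution $c_1n^{\lambda-j}\|v_k^{\sigma+j,\gamma}\|$ and a remainder bounded by $C\,\varrho_k^{4+1/2}(\varrho_k^{\mu+1}/n)^\nu n^{\lambda-j+q}$. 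Summing over $j$ and bounding the error terms using $\varrho_k\leq n$ (which holds because $n=\varrho_k^a$ with $a\geq\mu\geq 2$) gives $\varrho_k^jn^{\lambda-j+q}\leq n^{\lambda+q}$, so the remainders collapse into a single term of the stated form.

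I do not expect any serious obstacle: both parts are essentially bookkeeping. The only point requiring some care is verifying that the induction step for \eqref{ind} does not produce commutator remainders of lower order that spoil the clean form of the expansion — this is ensured precisely because $D_x^\alpha w_{n,k}^{\sigma,\gamma}=(-i\varrho_k)^\alpha w_{n,k}^{\sigma+\alpha,\gamma}$ is exact (no asymptotic series), so the composition $W_{n,k}^{\sigma,\gamma}\circ D_x^\lambda$ is a finite sum rather than an asymptotic expansion. After that, combining with Lemma~\ref{lemmaAB3} and the inequality $\varrho_k\leq n$ to collect the error terms is the routine step.
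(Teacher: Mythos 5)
Your proposal is correct and follows essentially the same route as the paper: the identity \eqref{ind} is obtained by induction from the exact (finite) composition formula for $W_{n,k}^{\sigma,\gamma}$ with the differential operator $D_x^\lambda$ (the paper expands the full commutator $[W_{n,k}^{\sigma,\gamma},D_x^\lambda]$ in one step via Theorem~\ref{thA} and then invokes the inductive hypothesis, whereas you iterate the first-order relation, but this is the same mechanism), and \eqref{2.28} then follows by the triangle inequality and Lemma~\ref{lemmaAB3} applied to each $\|D_x^{\lambda-j}v_k^{\sigma+j,\gamma}\|$ with $j\leq\lambda-1$, the $j=\lambda$ term being handled directly. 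Your treatment of the constants (powers of $i$ absorbed into $c_j$) and of the error terms via $\varrho_k\leq n$ matches the paper's.
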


\begin{proof}
Let us first prove \eqref{ind} by induction on $\lambda\in\N$.

For $\lambda=1$ we clearly have
$
W_{n,k}^{\sigma,\gamma}D_x=D_x W_{n,k}^{\sigma,\gamma}-\varrho_kW_{n,k}^{\sigma+1,\gamma}.
$

Let us assume \eqref{ind} to be true for every $\lambda'<\lambda$ and let us
prove it for $\lambda$. By Theorem \ref{thA}:
\beqsn
W_{n,k}^{\sigma,\gamma}D_x^\lambda=&&D_x^\lambda W_{n,k}^{\sigma,\gamma}
+[W_{n,k}^{\sigma,\gamma},
D_x^\lambda]\\
=&&D_x^\lambda W_{n,k}^{\sigma,\gamma}-\op\left(\sum_{\alpha=1}^\lambda
\frac{1}{\alpha!}\partial_\xi^\alpha\xi^\lambda\cdot
D_x^\alpha w_{n,k}^{\sigma,\gamma}\right)\\
=&&D_x^\lambda W_{n,k}^{\sigma,\gamma}-
\sum_{\alpha=1}^\lambda
\binom\lambda\alpha\varrho_k^\alpha\left(W_{n,k}^{\sigma+\alpha,\gamma}
D_x^{\lambda-\alpha}\right).
\eeqsn
By the inductive assumption, we thus have that
\beqsn
W_{n,k}^{\sigma,\gamma}D_x^\lambda
=&&D_x^\lambda W_{n,k}^{\sigma,\gamma}-\sum_{\alpha=1}^\lambda
\binom\lambda\alpha\varrho_k^\alpha\left(
\sum_{\ell=0}^{\lambda-\alpha}C_\ell\varrho_k^\ell D_x^{\lambda-\alpha-\ell}
W_{n,k}^{\sigma+\alpha+\ell,\gamma}\right)\\
=&&D_x^\lambda W_{n,k}^{\sigma,\gamma}-\sum_{\alpha=1}^\lambda\sum_{\ell=0}^{\lambda-\alpha}
C_{\alpha,\lambda,\ell}\varrho_k^{\alpha+\ell}D_x^{\lambda-\alpha-\ell}
W_{n,k}^{\sigma+\alpha+\ell,\gamma}\\
=&&\sum_{\alpha'=0}^\lambda C_{\alpha',\lambda}\varrho_k^{\alpha'}D_x^{\lambda-\alpha'}
W_{n,k}^{\sigma+\alpha',\gamma}.
\eeqsn
Therefore \eqref{ind} is proved and, applying
Lemma \ref{lemmaAB3} for $j\leq\lambda-1$, we have 
that for every $\nu\in\N$:
\beqsn
\|W_{n,k}^{\sigma,\gamma}(t,\cdot,D_x)D_x^\lambda u_k(t,\cdot)\|
\leq&&\ds\sum_{j=0}^\lambda c_j\varrho_k^j \|D_x^{\lambda-j}v_k^{\sigma+j,\gamma}\|\\
\leq&&\sum_{j=0}^{\lambda-1}c_j\varrho_k^j\left(c_1n^{\lambda-j}
\|v_k^{\sigma+j,\gamma}\|+
C_{\sigma,j,\gamma,\lambda,\nu}\varrho_k^{4+\frac12}\left(\frac{\varrho_k^{\mu+1}}{n}
\right)^\nu n^{\lambda-j+q}\right)\\
&&+c_\lambda\varrho_k^\lambda\|v_k^{\sigma+\lambda,\gamma}\|\\
\leq&&C'_\lambda \sum_{j=0}^\lambda \varrho_k^j n^{\lambda-j}
\|v_k^{\sigma+j,\gamma}\|
+C_{\sigma,\gamma,\lambda,\nu}\varrho_k^{4+\frac12}\left(\frac{\varrho_k^{\mu+1}}{n}
\right)^\nu n^{\lambda+q},
\eeqsn
for some $C'_\lambda,C_{\sigma,\gamma,\lambda,\nu}>0$.
This proves \eqref{2.28}.
\end{proof}

\begin{Lemma}
\label{lemmaAB4}
Let $a_j=a_j(t,x)$, for $0\leq j\leq p-1$, be the coefficients of the
operator \eqref{op}, and let $n=\varrho_k^a$ with
$a\geq\mu+1\geq2$.
Then, for every $\nu\in\N$ there exists $C_\nu>0$ such that
\beqsn
\|[a_j,W_{n,k}]D_x^ju_k(t,\cdot)\|\leq&&
C_\nu\,n^j\sum_{1\leq\alpha_1+\alpha_2\leq (\nu-1)(p-1)+j}
\left(\frac{\varrho_k^\mu}{n}
\right)^{\alpha_1+\alpha_2}
\|v_k^{\alpha_1,\alpha_2}\|
+C_\nu\varrho_k^{4+\frac12}\left(\frac{\varrho_k^{\mu+1}}{n}\right)^\nu n^{q+j}
\eeqsn
for all $t\in[0,\varrho_k/n^{p-1}]$ with $k$ large enough.
\end{Lemma}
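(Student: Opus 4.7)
The strategy is to expand the commutator $[a_j,W_{n,k}]$ to order $\nu$ via the symbol-product expansion of Theorem~\ref{thA}, reduce each resulting principal term to combinations of $\|v_k^{\alpha_1,\alpha_2}\|$ via \eqref{2.28} of Lemma~\ref{lemmaAB2}, and control the symbol-expansion remainder by a Calder\'on--Vaillancourt bound on an oscillatory integral, exactly in the spirit of the manipulation already performed in \eqref{AB6} of Lemma~\ref{lemmaAB3}.

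Because $a_j(t,x)$ does not depend on $\xi$, the composition $a_j W_{n,k}$ has symbol $a_j(t,x)\,w_{n,k}(t,x,\xi)$ exactly, while Theorem~\ref{thA} gives
\begin{equation*}
\sigma(W_{n,k}\,a_j)=\sum_{\alpha=0}^{\nu-1}\frac{1}{\alpha!}\partial_\xi^\alpha w_{n,k}\cdot D_x^\alpha a_j+r_\nu.
\end{equation*}
Since the $\alpha=0$ terms cancel in $[a_j,W_{n,k}]$, one obtains
\begin{equation*}
[a_j,W_{n,k}]D_x^ju_k=-\sum_{\alpha=1}^{\nu-1}\frac{1}{\alpha!}(D_x^\alpha a_j)\,\op(\partial_\xi^\alpha w_{n,k})(D_x^ju_k)-R_\nu D_x^ju_k.
\end{equation*}
Leibniz on the two $h$-factors in \eqref{defwnk} combined with Fa\`a di Bruno on $h(\varrho_k(x-x_k-pA_p(t)\xi^{p-1}))$ decomposes
\begin{equation*}
\partial_\xi^\alpha w_{n,k}=\sum_{\alpha_1+\alpha_2\leq\alpha}c_{\alpha_1,\alpha_2,\alpha}(t)\,A_p(t)^{\alpha_1}\varrho_k^{\alpha_1}\Bigl(\frac{\varrho_k^\mu}{n}\Bigr)^{\alpha_2}Q_{\alpha_1,\alpha_2,\alpha}(\xi)\,w_{n,k}^{\alpha_1,\alpha_2},
\end{equation*}
where $Q_{\alpha_1,\alpha_2,\alpha}$ is a polynomial in $\xi$ of degree $\leq(p-1)\alpha_1-(\alpha-\alpha_2)$ with uniformly bounded coefficients.

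For the principal sum I use $\|D_x^\alpha a_j\|_\infty\leq C$ (from $a_j\in\B^\infty$), the key pointwise bound $|A_p(t)|\leq C\varrho_k/n^{p-1}$ valid on $t\in[0,\varrho_k/n^{p-1}]$, and $|\xi|\leq 2n$ on $\supp w_{n,k}^{\alpha_1,\alpha_2}$ (Lemma~\ref{lemma2}). Writing $Q_{\alpha_1,\alpha_2,\alpha}(\xi)=\sum_h c_h\xi^h$ and using $\op(\xi^h w_{n,k}^{\alpha_1,\alpha_2})=W_{n,k}^{\alpha_1,\alpha_2}D_x^h$, each term $\|W_{n,k}^{\alpha_1,\alpha_2}D_x^{h+j}u_k\|$ is expanded via \eqref{2.28}, producing $\varrho_k^{j'}n^{h+j-j'}\|v_k^{\alpha_1+j',\alpha_2}\|$ for $0\leq j'\leq h+j$. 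Relabeling $(\alpha_1',\alpha_2')=(\alpha_1+j',\alpha_2)$ and collecting all the scalar powers, the combination $A_p^{\alpha_1}\varrho_k^{\alpha_1}\sum_h|c_h|n^h\leq C\varrho_k^{2\alpha_1}n^{\alpha_2-\alpha}$ paired with the factor $(\varrho_k^\mu/n)^{\alpha_2}\varrho_k^{j'}n^{j-j'}$ simplifies — owing to $\mu\geq 2$ (which converts $\varrho_k^2$ into $\varrho_k^\mu$) and $\alpha_1+\alpha_2\leq\alpha\leq\nu-1$ (which forces $n^{\alpha_1+\alpha_2-\alpha}\leq 1$) — to a multiple of $n^j(\varrho_k^\mu/n)^{\alpha_1'+\alpha_2'}$. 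The polynomial-degree budget $h\leq(p-1)\alpha_1-(\alpha-\alpha_2)$ also yields the upper bound $\alpha_1'+\alpha_2'\leq(p-1)(\nu-1)+j$ claimed in the statement.

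The remainder $R_\nu D_x^ju_k$ is treated exactly as in \eqref{AB6}: the oscillatory-integral representation of $r_\nu$ couples $\partial_\xi^\nu w_{n,k}$ with $D_x^\nu a_j$, and the $\xi^j$-factor inherited from $D_x^ju_k$ is shifted using the binomial identity $\xi^j=\sum_h\binom{j}{h}(\xi+\theta\eta)^h(-\theta\eta)^{j-h}$ and integration by parts in $y$ to swap $(-\eta)^{j-h}$ for $D_y^{j-h}$; Theorem~\ref{thB} applied together with Lemma~\ref{lemmaAB1}(iii) and the a priori bound \eqref{C'T} then yields the claimed error $C_\nu\varrho_k^{4+1/2}(\varrho_k^{\mu+1}/n)^\nu n^{q+j}$. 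The parallel remainders produced when \eqref{2.28} is invoked inside the principal sum are of the same form and are absorbed by taking the order-parameter in each application of Lemma~\ref{lemmaAB2} sufficiently large relative to $\nu$. The \emph{main obstacle} is the bookkeeping in the principal sum: four independent sources of powers — the time restriction on $A_p(t)$, the Leibniz/Fa\`a di Bruno expansion, the polynomial-degree budget, and the composition identity \eqref{ind} — must conspire to collapse onto the uniform shape $n^j(\varrho_k^\mu/n)^{\alpha_1+\alpha_2}$ with exactly the stated index range.
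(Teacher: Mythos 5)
Your proposal is correct and follows essentially the same route as the paper: the Theorem~\ref{thA} expansion of $\sigma([a_j,W_{n,k}])\xi^j$ with the $\alpha=0$ cancellation, the Fa\`a di Bruno/Leibniz decomposition of $\partial_\xi^\alpha w_{n,k}$ into terms $A_p(t)^{s}\varrho_k^{s}(\varrho_k^\mu/n)^{\alpha_2}\xi^{h}w_{n,k}^{s,\alpha_2}$ reduced via \eqref{2.28}, and the oscillatory-integral remainder handled by the binomial shift $\xi^j=\sum_h\binom jh(\xi+\theta\eta)^h(-\theta\eta)^{j-h}$, integration by parts, Theorem~\ref{thB} and Lemma~\ref{lemmaAB1}(iii). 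Your power-counting (using $|A_p(t)|\lesssim\varrho_k/n^{p-1}$, $\mu\geq2$ and the degree budget $h\leq(p-1)\alpha_1-(\alpha-\alpha_2)$) does collapse to $n^j(\varrho_k^\mu/n)^{\alpha_1'+\alpha_2'}$ with $\alpha_1'+\alpha_2'\leq(\nu-1)(p-1)+j$, exactly as in the paper's display \eqref{A6}.
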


\begin{proof}
By Theorem \ref{thA}, for all $\nu\in\N$
\beqs
\nonumber
&&\sigma\left([a_j(t,x),W_{n,k}(t,x,D_x)]D_x^j\right)=
\sigma([a_j,W_{n,k}])\cdot\xi^j\\
\label{A5}
=&&-\left(\sum_{1\leq\alpha\leq \nu-1}\frac{1}{\alpha!}\partial_\xi^\alpha
w_{n,k}\cdot D_x^\alpha a_j\right)\xi^j
-\int_0^1\frac{(1-\theta)^{\nu-1}}{(\nu-1)!}
\tilde{\Osc}_\nu(t,\theta,x,\xi)
 d\theta,
\eeqs
where
\beqsn
\tilde{\Osc}_\nu(t,\theta,x,\xi):=
\int\!\!\int
e^{-iy\eta}\partial_\xi^\nu w_{n,k}(t,x;\xi+\theta\eta)D_x^\nu
a_j(t,x+y)\cdot\xi^jdy\dbar\eta.
\eeqsn 

Arguing as in the proof of Lemma \ref{lemmaAB3} we can estimate,
by Theorem \ref{thB} and Lemma \ref{lemmaAB1}:
\beqs
\nonumber
|\tilde{\Osc}_\nu|_{2,2}^{0}=&&\left|
\sum_{h=0}^j\binom jh \theta^{j-h}\int\!\!\int
\partial_\xi^\nu w_{n,k}(t,x;\xi+\theta\eta)\cdot(\xi+\theta\eta)^h
D_x^\nu a_j(t,x+y)D_y^{j-h}e^{-iy\eta}dy\dbar\eta\right|_{2,2}^{0}\\
\nonumber
\leq&&\sum_{h=0}^j\binom jh\left|\int\!\!\int e^{-iy\eta}
\partial_\xi^\nu w_{n,k}(t,x;\xi+\theta\eta)\cdot(\xi+\theta\eta)^h
D_x^{\nu+j-h}a_j(t,x+y)dy\dbar\eta\right|_{2,2}^{0}\\
\nonumber
\leq&&\sum_{h=0}^jC_j\left|\xi^h\partial_\xi^\nu w_{n,k}(t,x;\xi)\right|_{4,4}^{0}
\cdot\left|D_x^{\nu+j-h}a_j(t,x)\right|_{4,4}^{0}\\
\label{A4}
\leq&&C_{\nu}n^j
\varrho_k^{4+\frac12}\left(\frac{\varrho_k^\mu}{n}\right)^\nu
\eeqs
for some $C_j,C_\nu>0$, since $a_j\in C([0,T];\B^\infty)$ for
$0\leq j\leq p-1$.

In order to estimate now the first term of \eqref{A5},
we previously compute, by the 
Fa\`a di Bruno formula:
\beqsn
\partial_\xi^\alpha
w_{n,k}
=&&\sum_{\alpha_1+\alpha_2=\alpha}\frac{\alpha!}{\alpha_1!\alpha_2!}
\varrho_k^{1/2}
\cdot\partial_\xi^{\alpha_1}h\Big(\varrho_k(x-x_k-pA_p(t)\xi^{p-1})\Big)
\cdot\partial_\xi^{\alpha_2}h
\Big(\varrho_k^\mu\Big(\frac\xi n-1\Big)\Big)\\
&&=\varrho_k^{1/2}
h\Big(\varrho_k(x-x_k-pA_p(t)\xi^{p-1})\Big)
\cdot\partial_\xi^{\alpha}h
\Big(\varrho_k^\mu\Big(\frac\xi n-1\Big)\Big)\\
&&+\sum_{\afrac{\alpha_1+\alpha_2=\alpha}{\alpha_1\geq1}}
\frac{\alpha!}{\alpha_1!\alpha_2!}
\sum_{\afrac{r_1+\ldots+r_s=\alpha_1}{r_h\geq1}}C_{s,r}\varrho_k^{1/2}
h^{(s)}\Big(\varrho_k(x-x_k-pA_p(t)\xi^{p-1})\Big)\\
&&\cdot\partial_\xi^{r_1}\Big[\varrho_k(x-x_k-pA_p(t)\xi^{p-1})\Big]
\cdots\partial_\xi^{r_s}\Big[\varrho_k(x-x_k-pA_p(t)\xi^{p-1})\Big]\\
&&\cdot\Big(\frac{\varrho_k^\mu}{n}\Big)^{\alpha_2}
h^{(\alpha_2)}\Big(\varrho_k\Big(\frac\xi n-1\Big)\Big)\\
=&&\left(\frac{\varrho_k^\mu}{n}\right)^\alpha\varrho_k^{1/2}
h\Big(\varrho_k(x-x_k-pA_p(t)\xi^{p-1})\Big)
h^{(\alpha)}
\Big(\varrho_k^\mu\Big(\frac\xi n-1\Big)\Big)\\
&&+\sum_{\afrac{\alpha_1+\alpha_2=\alpha}{\alpha_1\geq1}}
\frac{\alpha!}{\alpha_1!\alpha_2!}
\sum_{\afrac{r_1+\ldots+r_s=\alpha_1}{1\leq r_h\leq p-1}}C'_{s,r}
\Big(\varrho_kA_p(t)\Big)^{\alpha_1}\cdot\xi^{s(p-1)-\alpha_1}\\
&&\cdot
\Big(\frac{\varrho_k^\mu}{n}\Big)^{\alpha_2}
\varrho_k^{1/2}
h^{(s)}\Big(\varrho_k(x-x_k-pA_p(t)\xi^{p-1})\Big)
h^{(\alpha_2)}\Big(\varrho_k\Big(\frac\xi n-1\Big)\Big)
\eeqsn
for some $C_{s,r},C'_{s,r}>0$.
Coming back to the first term of \eqref{A5} and taking into account 
the definition \eqref{defwnk} of $w_{n,k}$:
\beqsn
&&\left(\sum_{1\leq\alpha\leq \nu-1}\frac{1}{\alpha!}\partial_\xi^\alpha
w_{n,k}\cdot D_x^\alpha a_j\right)\xi^j
\leq\sum_{1\leq\alpha\leq \nu-1}
\frac{D_x^\alpha a_j}{\alpha!}\left(\frac{\varrho_k^\mu}{n}\right)^{\alpha}
w_{n,k}^{0,\alpha}\cdot\xi^j\\
&&+\sum_{1\leq\alpha\leq \nu-1}\sum_{\afrac{\alpha_1+\alpha_2=\alpha}{\alpha_1\geq1}}
\frac{D_x^\alpha a_j}{\alpha_1!\alpha_2!}
\sum_{\afrac{r_1+\ldots+r_s=\alpha_1}{1\leq r_h\leq p-1}}C'_{s,r}
\varrho_k^{\alpha_1}A_p(t)^{\alpha_1}
\left(\frac{\varrho_k^\mu}{n}\right)^{\alpha_2}
w_{n,k}^{s,\alpha_2}\cdot\xi^{s(p-1)-\alpha_1+j}
\eeqsn
and hence
\beqsn
&&\Big\|\op\Big[\Big(\sum_{1\leq\alpha\leq \nu-1}\frac{1}{\alpha!}
\partial_\xi^\alpha
w_{n,k}\cdot D_x^\alpha a_j\Big)\xi^j\Big]u_k\Big\|\\
\nonumber
\leq&&\sum_{1\leq\alpha\leq \nu-1}C_{\alpha,j}
\left(\frac{\varrho_k^\mu}{n}\right)^{\alpha}
\left\|W_{n,k}^{0,\alpha}D_x^ju_k\right\|\\
\nonumber
&&+\sum_{1\leq\alpha\leq \nu-1}\sum_{\afrac{\alpha_1+\alpha_2=\alpha}{\alpha_1\geq1}}
C_{\alpha,j}
\sup_{[0,\varrho_k/n^{p-1}]}|A_p(t)|^{\alpha_1}
\varrho_k^{\alpha_1}\left(\frac{\varrho_k^\mu}{n}\right)^{\alpha_2}
\sum_{s=1}^{\alpha_1}\left\|W_{n,k}^{s,\alpha_2}D_x^{s(p-1)-\alpha_1+j}u_k\right\|.
\eeqsn

Applying \eqref{2.28}, since $s\leq\alpha_1$, $\mu\geq2$ and $0\leq j\leq p-1$, 
we thus obtain, for $t\in[0,\varrho_k/n^{p-1}]$:
\beqs
\nonumber
\hspace*{12mm}
&&\Big\|\op\Big[\Big(\sum_{1\leq\alpha\leq \nu-1}\frac{1}{\alpha!}
\partial_\xi^\alpha
w_{n,k}\cdot D_x^\alpha a_j\Big)\xi^j\Big]u_k\Big\|\\
\nonumber
\leq&&
\sum_{1\leq\alpha\leq \nu-1}C'_{\alpha,j}
\left(\frac{\varrho_k^\mu}{n}\right)^{\alpha}
\bigg[\sum_{h=0}^j\varrho_k^h n^{j-h}
\|v_k^{h,\alpha}\|+C_{\alpha,j,\nu}\varrho_k^{4+\frac12}
\left(\frac{\varrho_k^{\mu+1}}{n}\right)^\nu n^{j+q}\bigg]\\
\nonumber
&&+\sum_{1\leq\alpha\leq \nu-1}\sum_{\afrac{\alpha_1+\alpha_2=\alpha}{\alpha_1\geq1}}
C'_{\alpha,j}\left(\frac{\varrho_k}{n^{p-1}}\right)^{\alpha_1}
\varrho_k^{\alpha_1}\left(\frac{\varrho_k^\mu}{n}\right)^{\alpha_2}\\
\nonumber
&&\cdot
\sum_{s=1}^{\alpha_1}\bigg[C_{s,\alpha,j}
\sum_{h=0}^{s(p-1)-\alpha_1+j}\varrho_k^h n^{s(p-1)-\alpha_1+j-h}
\|v_k^{s+h,\alpha_2}\|\\
\nonumber
&&
+C_{s,\alpha,j,\nu}\varrho_k^{4+\frac12}
\left(\frac{\varrho_k^{\mu+1}}{n}\right)^\nu
n^{s(p-1)-\alpha_1+j+q}\bigg]\\
\nonumber
\leq&&C_\nu n^j\sum_{1\leq\alpha\leq \nu-1}
\left(\frac{\varrho_k^\mu}{n}\right)^{\alpha}
\sum_{h=0}^j\left(\frac{\varrho_k}{n}\right)^h
\|v_k^{h,\alpha}\|
+C_\nu\varrho_k^{4+\frac12}
\left(\frac{\varrho_k^{\mu+1}}{n}\right)^\nu n^{j+q}\\
\nonumber
&&+C_\nu\sum_{1\leq\alpha\leq \nu-1}
\sum_{\afrac{\alpha_1+\alpha_2=\alpha}{\alpha_1\geq1}}
\left(\frac{\varrho_k^\mu}{n}\right)^{\alpha_1+\alpha_2}\frac{1}{n^{\alpha_1(p-2)}}
\\
\nonumber
&&\cdot\bigg[n^{\alpha_1(p-2)+j}\sum_{s=1}^{\alpha_1}
\sum_{h=0}^{s(p-1)-\alpha_1+j}\left(\frac{\varrho_k}{n}\right)^h
\|v_k^{s+h,\alpha_2}\|
+\varrho_k^{4+\frac12}
\left(\frac{\varrho_k^{\mu+1}}{n}\right)^\nu n^{\alpha_1(p-2)+j+q}\bigg]\\
\nonumber
\leq&&C_\nu n^j\sum_{1\leq\alpha_2\leq \nu-1}\sum_{h=0}^j
\left(\frac{\varrho_k^\mu}{n}\right)^{h+\alpha_2}\|v_k^{h,\alpha_2}\|
+C'_{\nu}\varrho_k^{4+\frac12}
\left(\frac{\varrho_k^{\mu+1}}{n}\right)^\nu n^{q+j}\\
\nonumber
&&+C_\nu n^j\sum_{\afrac{1\leq \alpha_1+\alpha_2\leq \nu-1}{\alpha_1\geq1}}
\sum_{s=1}^{\alpha_1}\sum_{h=0}^{s(p-1)-\alpha_1+j}
\left(\frac{\varrho_k^\mu}{n}\right)^{s+h+\alpha_2}
\|v_k^{s+h,\alpha_2}\|\\
\label{A6}
\leq&&C_\nu n^j\sum_{1\leq \alpha_1+\alpha_2\leq (\nu-1)(p-1)+j}
\left(\frac{\varrho_k^\mu}{n}\right)^{\alpha_1+\alpha_2}
\|v_k^{\alpha_1,\alpha_2}\|
+C'_{\nu}\varrho_k^{4+\frac12}
\left(\frac{\varrho_k^{\mu+1}}{n}\right)^\nu n^{q+j}
\eeqs
for some $C_{\alpha,j},C_{\alpha,j,\nu},C'_{\alpha,j},C'_{\alpha,j,\nu},
C_{s,\alpha,j},C_{s,\alpha,j,\nu},C_\nu,C'_{\nu}>0$.

By the Calder\'on-Vaillancourt's Theorem \ref{thC}, by 
\eqref{A5}, \eqref{A4} and \eqref{A6} we get:
\beqsn
&&\|[a_j,W_{n,k}]D_x^ju_k\|\leq C|\tilde{\Osc}_\nu|_{2,2}^{0}\cdot\|u_k\|
+\Big\|\op\Big[\Big(\sum_{1\leq\alpha\leq \nu-1}
\frac{1}{\alpha!}\partial_\xi^\alpha w_{n,k}\cdot D_x^\alpha a_j\Big)
\xi^j\Big]u_k\Big\|\\
\leq&&C'_{\nu}\varrho_k^{4+\frac12}
\left(\frac{\varrho_k^{\mu+1}}{n}\right)^\nu n^{q+j}
+C_{\nu}n^j\sum_{1\leq \alpha_1+\alpha_2\leq (\nu-1)(p-1)+j}
\left(\frac{\varrho_k^\mu}{n}\right)^{\alpha_1+\alpha_2}
\|v_k^{\alpha_1,\alpha_2}\|\\
\eeqsn
for some $C,C_\nu,C'_\nu>0$.
\end{proof}


\section{Estimates from below}
\label{sec3}

In this section we want to produce estimates from below of the $L^2$-norms 
of the functions $v_k$ and $v_k^{\alpha,\beta}$, and then of a 
linear combination $\sigma_k(t)$ of the $L^2$-norms of $v_k^{\alpha,\beta}$, 
$\alpha+\beta\geq0.$  

We start with the estimate of $\|v_k(0,\cdot)\|$.
For $n$ as in \eqref{n} and $k$ large enough, from \eqref{fgk} we have that
\beqs
\label{hint}
\supp \hat g_k=\supp \hat\psi(\xi-n)
\subseteq\{\xi\in\R:\ 
h(\varrho_k^\mu(\xi/ n-1))=1\}.
\eeqs
Therefore
\beqsn
v_k(0,x)&=&W_{n,k}u_k(0,x)=\int e^{ix\xi}w_{n,k}(0,x,\xi)\widehat {g_k}(\xi)
\dbar\xi
\\
\nonumber
&=&\int e^{ix\xi} \varrho_k^{1/2}h\left(\varrho_k(x-x_k)\right)
\underbrace{h\left(\varrho_k^\mu(\xi/n-1)\right)}_1e^{-ix_k\xi}
\hat\psi(\xi-n)\dbar\xi
\\
\nonumber
&=& \varrho_k^{1/2}h\left(\varrho_k(x-x_k)\right)e^{i(x-x_k)n}\psi(x-x_k)
\eeqsn
and 
\beqs\label{vk0}
\|v_k(0,\cdot)\|^2&=&\int \varrho_k|h\left(\varrho_k(x-x_k)\right)|^2
|\psi(x-x_k)|^2dx=\int |h(y)|^2|\psi(y/\varrho_k)|^2dy
\\
\nonumber
&\geq &\int |h(y)|^2dy=\|h\|^2>0
\eeqs
if $k$ is large enough, since $\psi(0)=2$ and $\varrho_k\to+\infty$.

Now, to produce an estimate from below of $\|v_k(t,\cdot)\|$, our 
idea is to follow the energy method, producing a "reverse energy estimate". 
To this aim, denoting by $\langle\cdot,\cdot\rangle$ 
the scalar product on $L^2$, 
we consider
\beqs
\nonumber
\ds\frac{d}{dt}\|v_k(t,\cdot)\|^2=&&
2\Re \langle \partial_tv_k,v_k\rangle\\
\label{encontrary}
=&& 2\Re i \langle Pv_k,v_k\rangle-2\Re i 
a_p(t)\langle D_x^pv_k,v_k\rangle
-2\Re i \sum_{j=0}^{p-1}\langle (a_j(t,x)D_x^jv_k,v_k\rangle.
\eeqs
We compute separately estimates from below of each term in formula 
\eqref{encontrary}.
By definition of $v_k$ we have that
\beqsn
Pv_k&=&PW_{n,k}u_k=W_{n,k}Pu_k+[P, W_{n,k}]u_k= 
\\
&=&0+[D_t+a_p(t)D_x^p, W_{n,k}]u_k+\sum_{j=0}^{p-1}[a_j(t,x)D_x^j, W_{n,k}]u_k,
\eeqsn
since $Pu_k=0$.

Developing the symbol of the commutator $[D_t+a_p(t)D_x^p, W_{n,k}]$ and using 
the fact that $w_{n,k}$ is the solution of Hamilton's
equation \eqref{215} we obtain, by Theorem \ref{thA}:
\beqsn
\sigma\left(\left[D_t+a_p(t)D_x^p, W_{n,k}\right]\right)(t,x,\xi)=&&
D_tw_{n,k}+a_p(t)\sigma\left([D_x^p, W_{n,k}]\right)\\
=&&D_tw_{n,k}+a_p(t)\sum_{\alpha=1}^p\frac{1}{\alpha!}\partial_\xi^\alpha
\xi^p\cdot D_x^\alpha w_{n,k}\\
=&&(D_t+pa_p(t)\xi^{p-1}D_x)w_{n,k}+a_p(t)\sum_{\alpha=2}^p\binom p\alpha
\xi^{p-\alpha}D_x^\alpha w_{n,k}\\
=&&a_p(t)\sum_{\alpha=2}^p\binom p\alpha
\xi^{p-\alpha}D_x^\alpha w_{n,k}.
\eeqsn
Defining then
\beqs
\label{fk}
f_k:=\op\left(a_p(t)\sum_{\alpha=2}^p\binom p\alpha
\xi^{p-\alpha}D_x^\alpha w_{n,k}\right)u_k
+\sum_{j=0}^{p-1}[a_j(t,x)D_x^j,W_{n,k}]u_k,
\eeqs
we have that
\beqsn
Pv_k=f_k
\eeqsn
and hence from \eqref{encontrary} we get
\beqs
\nonumber
\ds\frac{d}{dt}\|v_k(t,\cdot)\|^2=&&2\Re i\langle f_k,v_k\rangle
-2\Re ia_p(t)\langle D_x^pv_k,v_k\rangle-\sum_{j=0}^{p-1}2\Re i
\langle a_jD_x^jv_k,v_k\rangle\\
\label{EB3}
=&&2\Re i\langle f_k,v_k\rangle
-\sum_{j=0}^{p-1}\langle (ia_jD_x^j+(ia_jD_x^j)^*)v_k,v_k\rangle
\eeqs
since $\Re i\langle D_x^p v_k,v_k\rangle=0$.
Now,
\beqsn
\sigma(ia_j(t,x)D_x^j)^*=\sum_{\alpha\geq0}\frac{1}{\alpha!}\partial_\xi^\alpha
D_x^\alpha(\overline{ia_j(t,x)\xi^j})
=\sum_{\alpha=0}^j\binom j\alpha D_x^\alpha(
-i\Re a_j-\Im a_j(t,x))\xi^{j-\alpha},
\eeqsn
and hence
\beqsn
\sum_{j=0}^{p-1}\sigma[(ia_jD_x^j)+(ia_jD_x^j)^*]=&&
\sum_{j=0}^{p-1}\left[-2\Im a_j\xi^j
+\sum_{\alpha=1}^j\binom j\alpha D_x^\alpha\left(-i\Re a_j-
 \Im a_j\right)\xi^{j-\alpha}\right]\\
=&&-2\sum_{j=0}^{p-1}\Im a_j\xi^j+
\sum_{h=0}^{p-2}\sum_{j=h+1}^{p-1}\binom jh D_x^{j-h}\left(-i\Re a_j-\Im a_j
\right)\xi^h\\
=&&-2\Im a_{p-1}\xi^{p-1}\\
&&+\sum_{h=0}^{p-2}\left[-2\Im a_h
+\sum_{j=h+1}^{p-1}\binom jh D_x^{j-h}\left(-i\Re a_j-\Im a_j\right)\right]\xi^h.
\eeqsn

Substituting in \eqref{EB3}, 
we have that there exist
postive constants $A_1,c'$ such that
\beqs
\nonumber
\ds\frac{d}{dt}\|v_k(t,\cdot)\|^2\geq &&-2\|f_k\|\cdot\|v_k\|
+2\langle\Im a_{p-1} D_x^{p-1}v_k,v_k\rangle-A_1\|v_k\|^2\\
\nonumber
&&+\sum_{h=1}^{p-2}
\left[2\langle\Im a_hD_x^h v_k,v_k\rangle
+\sum_{j=h+1}^{p-1}
\binom jh
\langle(D_x^{j-h}(i\Re a_j+\Im a_j))D_x^hv_k,v_k\rangle\right]\\
\label{encontrary2}
\geq&&2\langle\Im a_{p-1} D_x^{p-1}v_k,v_k\rangle
-2\|f_k\|\cdot\|v_k\|-A_1\|v_k\|^2-
c'\frac{\,n^{p-1}}{\varrho_k}
\|v_k\|^2,
\eeqs
since 
\beqsn
|\langle\Im a_hD_x^hv_k,v_k\rangle|
\leq c n^h\|v_k\|^2
\leq c n^{p-2}\|v_k\|^2
\leq c\frac{\,n^{p-1}}{\varrho_k}\|v_k\|^2
\eeqsn
because of the support of $w_{n,k}$,
and analogously
\beqsn
|\langle(D_x^{j-h}(i\Re a_j+\Im a_j))D_x^hv_k,v_k\rangle|
\leq c\frac{\,n^{p-1}}{\varrho_k}\|v_k\|^2.
\eeqsn

 Now we want to give
 estimates of the terms in \eqref{encontrary2}. This is done in the 
following Propositions \ref{ima2} and \ref{lemma24}.
 
\begin{Prop}
\label{ima2} 
Let $n=\varrho_k^a$ with $a\geq\mu\geq 2$. 
Then, for all 
$\nu\in\N$ there exists $C_\nu>0$ such that, for every
$t\in \left[0,\ds\frac{\varrho_k}{n^{p-1}}\right]$ with
$k$ large enough:
\beqs
\label{AB8}
\langle \Im a_{p-1}(t,x)D_x^{p-1}v_k,v_k\rangle&\geq&
\left( \Im a_{p-1}(t,x_k+pA_p(t)n^{p-1})n^{p-1}-
C\frac{\,n^{p-1}}{\varrho_k}\right)\|v_k\|^2
\\
\nonumber
&&-C_\nu{\varrho_k}^{4+\frac12}\left(\frac{{\varrho_k}^{\mu+1}}{n}\right)^\nu 
n^{q+p-1}\|v_k\|,
\eeqs
for some fixed $C>0.$
\end{Prop}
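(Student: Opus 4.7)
The plan is to approximate $\Im a_{p-1}(t,x)D_x^{p-1}v_k$ by the constant operator $\Im a_{p-1}(t,x_k+pA_p(t)n^{p-1})\,n^{p-1}$ applied to $v_k$, with a total error controlled (up to genuinely small remainders) by $n^{p-1}/\varrho_k\cdot\|v_k\|$. The argument splits naturally into two independent reductions: first reduce $D_x^{p-1}$ to the scalar $n^{p-1}$ by frequency localization, then reduce the multiplier $\Im a_{p-1}(t,x)$ to its value at the centered point $x_k+pA_p(t)n^{p-1}$ by position localization of $v_k$.

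For the frequency reduction I would reuse the cutoff $\chi_{1,k}$ from \eqref{chi1k} exactly as in the proof of Lemma \ref{lemmaAB3} and decompose $D_x^{p-1}v_k=\chi_{1,k}(D_x)D_x^{p-1}v_k+(1-\chi_{1,k}(D_x))D_x^{p-1}v_k$. The second piece is small by the oscillatory-integral argument already carried out in Lemma \ref{lemmaAB3} (applied with $r=p-1$, $\alpha=\beta=0$), exploiting the disjoint-support property \eqref{intvuota}. On $\supp\chi_{1,k}$ one has $|\xi^{p-1}-n^{p-1}|\leq Cn^{p-1}/\varrho_k^\mu$, so Calder\'on--Vaillancourt gives $\|\chi_{1,k}(D_x)(D_x^{p-1}-n^{p-1})v_k\|\leq Cn^{p-1}/\varrho_k^\mu\,\|v_k\|$. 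Replacing finally $\chi_{1,k}(D_x)v_k$ by $v_k$ itself (the same disjoint-support argument applied now to $(1-\chi_{1,k}(D_x))W_{n,k}u_k$) and using $\mu\geq 2$ so that $1/\varrho_k^\mu\leq 1/\varrho_k$, I would obtain $D_x^{p-1}v_k=n^{p-1}v_k+R_k$ with $\|R_k\|\leq Cn^{p-1}/\varrho_k\,\|v_k\|+C_\nu\varrho_k^{4+1/2}(\varrho_k^{\mu+1}/n)^\nu n^{q+p-1}$.

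Taking the inner product with $v_k$, I would absorb $|\langle \Im a_{p-1}\,R_k,v_k\rangle|\leq \|a_{p-1}\|_\infty\|R_k\|\,\|v_k\|$ directly into the two remainder terms of \eqref{AB8}. The main remaining piece is $n^{p-1}\int\Im a_{p-1}(t,x)|v_k(t,x)|^2\,dx$, which I would treat by Taylor expansion: write $\Im a_{p-1}(t,x)=\Im a_{p-1}(t,x_k+pA_p(t)n^{p-1})+e(t,x)$ with $|e(t,x)|\leq C|x-(x_k+pA_p(t)n^{p-1})|$ (since $a_{p-1}\in\B^\infty$ is Lipschitz in $x$). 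The constant term yields the main contribution $\Im a_{p-1}(t,x_k+pA_p(t)n^{p-1})n^{p-1}\|v_k\|^2$, while the $e$-term is controlled by the crucial support property $\supp v_k(t,\cdot)\subseteq\{|x-(x_k+pA_p(t)n^{p-1})|\leq c_p/\varrho_k\}$; this I would derive from Lemma \ref{lemma2} (with $\alpha=\beta=0$) applied to the oscillatory representation $v_k(t,x)=\int e^{ix\xi}w_{n,k}(t,x,\xi)\hat u_k(\xi)\,\dbar\xi$, since for every $\xi\in\supp_\xi w_{n,k}$ the symbol $w_{n,k}(t,x,\xi)$ vanishes outside that $x$-interval.

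The main obstacle is the support property of $v_k$ in $x$, which is precisely what forces $\mu\geq 2$: the position cutoff $h(\varrho_k(\cdot))$ alone gives width $1/(2\varrho_k)$, but the $\xi$-dependence of the center $x_k+pA_p(t)\xi^{p-1}$ in the definition of $w_{n,k}$ introduces an additional displacement of size $pA_p(t)|\xi^{p-1}-n^{p-1}|\leq C/\varrho_k^{\mu-1}$ for $t\in[0,\varrho_k/n^{p-1}]$, and this must be $\leq C/\varrho_k$ in order not to spoil the final $n^{p-1}/\varrho_k$ estimate. Beyond that one geometric point, everything is bookkeeping within the pseudodifferential framework already assembled in Section \ref{sec2}.
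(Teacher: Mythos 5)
Your proposal is correct and follows the same overall strategy as the paper's proof: isolate the main term $\Im a_{p-1}(t,x_k+pA_p(t)n^{p-1})\,n^{p-1}\|v_k\|^2$, control a frequency error via $\chi_{1,k}$ together with the disjoint-support oscillatory remainder, and control a position error via the Lipschitz bound on $\Im a_{p-1}$ combined with the $c_p/\varrho_k$ spatial width from Lemma~\ref{lemma2}. The one genuine difference is in the position step: the paper keeps the operator $I_3=\bigl(\Im a_{p-1}(t,x)-\Im a_{p-1}(t,x_k+pA_p(t)n^{p-1})\bigr)D_x^{p-1}$ intact, inserts the cutoff $\chi_{2,k}$ of \eqref{chi2k}, and must then invoke Lemma~\ref{lemmaAB3} a second time to bound $\|D_x^{p-1}v_k\|$; you instead perform the frequency reduction first, so the coefficient variation only multiplies $n^{p-1}v_k$, and you use directly that $v_k(t,\cdot)$ vanishes outside $\{|x-(x_k+pA_p(t)n^{p-1})|\leq c_p/\varrho_k\}$ --- which is legitimate, since $p(x,D)u(x_0)$ depends only on $p(x_0,\cdot)$, so the support statement of Lemma~\ref{lemma2} transfers from the symbol to $v_k$ itself. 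This is slightly more economical (no $\chi_{2,k}$, no second appeal to Lemma~\ref{lemmaAB3}) and yields the same $C n^{p-1}\varrho_k^{-1}\|v_k\|^2$ bound for that term; your closing observation that $\mu\geq2$ is forced by the $\xi$-dependence of the center $x_k+pA_p(t)\xi^{p-1}$ is exactly the content of the proof of Lemma~\ref{lemma2} and of \eqref{yuppi}.
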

\begin{proof}
We split 
\beqs
\nonumber
\Im a_{p-1}(t,x)D_x^{p-1}&=&\Im a_{p-1}(t,x_k +pA_p(t)n^{p-1})n^{p-1}\\
\label{split}
&&+\Im a_{p-1}(t,x_k +pA_p(t)n^{p-1})(D_x^{p-1}-n^{p-1})\\
\nonumber
&&+\left(\Im a_{p-1}(t,x)-\Im a_{p-1}(t,x_k +pA_p(t)n^{p-1})\right)D_x^{p-1} 
\eeqs
and set
\beqsn
&I_1:=\Im a_{p-1}(t,x_k +pA_p(t)n^{p-1})n^{p-1},&
\\
&I_2:=\Im a_{p-1}(t,x_k +pA_p(t)n^{p-1})(D_x^{p-1}-n^{p-1})&
\\
&I_3:= \left(\Im a_{p-1}(t,x)-\Im a_{p-1}(t,x_k +pA_p(t)n^{p-1})\right)D_x^{p-1}.& 
\eeqsn
We have 
\beqs\label{I1vk}
\langle I_1v_k,v_k\rangle=\Im a_{p-1}(t,x_k +pA_p(t)n^{p-1})n^{p-1}\|v_k\|^2.
\eeqs
To estimate $\langle I_2v_k,v_k\rangle$, we localize at frequency
$n$ by means of the
function $\chi_{1,k}$ defined in \eqref{chi1k}
and write 
\beqsn
I_2v_k&=&\chi_{1,k}(D_x)I_2v_k+(1-\chi_{1,k}(D_x))I_2v_k
\\
&=&\Im a_{p-1}(t,x_k +pA_p(t)n^{p-1})[\chi_{1,k}(D_x)(D_x^{p-1}-n^{p-1})v_k\\
&&+\left(1-\chi_{1,k}(D_x)\right)(D_x^{p-1}-n^{p-1})v_k],
\eeqsn
so, denoting by
\beqs
&J_1:=\|\chi_{1,k}(D_x)(D_x^{p-1}-n^{p-1})v_k\|,&
\\
\label{j2}
&J_2:=\|\left(1-\chi_{1,k}(D_x)\right)(D_x^{p-1}-n^{p-1})v_k\|,&
\eeqs
we have 
\beqs\label{normaI2vk}
\|I_2v_k\|\leq |\Im a_{p-1}(t,x_k +pA_p(t)n^{p-1})|(J_1+J_2).
\eeqs
By Calder\'on-Vaillancourt's Theorem \ref{thC},
\beqs
\label{normaj1}
J_1\leq C\vert\chi_{1,k}(\xi)(\xi^{p-1}-n^{p-1})\vert_{2,2}^{0}\|v_k\|
\leq C'\frac{n^{p-1}}{\varrho_k^\mu}\|v_k\|
\eeqs
for some $C,C'>0$, since by \eqref{suppchi1k}:
\beqsn
|\chi_{1,k}(\xi)(\xi^{p-1}-n^{p-1})|=&&
|\chi_{1,k}(\xi)(\xi-n)(\xi^{p-2}+n\xi^{p-3}+n^2\xi^{p-4}+\ldots+
n^{p-2})|\\
\leq&& c\frac{n}{\varrho_k^\mu}(p-1)n^{p-2}=c'\frac{n^{p-1}}{\varrho_k^\mu},
\eeqsn
for some $c,c'>0$, and for all $\gamma=\gamma_1+\gamma_2$ 
with $|\gamma|\leq 2$ there are constants $C_{\gamma_1}$, $C_\gamma>0$ such that:
\beqsn
|\partial_\xi^{\gamma_1}\chi_{1,k}(\xi)
\partial_\xi^{\gamma_2}(\xi^{p-1}-n^{p-1})|\leq 
\begin{cases}
 C_{\gamma_1} \ds\frac{n^{p-1}}{\varrho_k^\mu}& \gamma_2=0
\\
C_{\gamma}n^{p-1-\gamma_2}\leq  C_\gamma\ds\frac{n^{p-1}}{\varrho_k^\mu}&
\gamma_2\geq1.
\end{cases}
\eeqsn
As it concerns \eqref{j2}, by definition of $v_k$ we write
\beqs
\label{j22}
(D_x^{p-1}-n^{p-1})v_k=
\left(W_{n,k}(D_x^{p-1}-n^{p-1})+[D_x^{p-1}-n^{p-1},W_{n,k}]\right)u_k.
\eeqs
Since
$
\sigma([D_x^{p-1}-n^{p-1},W_{n,k}])=
\sum_{\alpha=1}^{p-1}\binom{p-1}{\alpha}\xi^{p-1-\alpha}
\varrho_k^\alpha w_{n,k}^{\alpha,0},
$
we have that
\beqs
\label{j222}
[D_x^{p-1}-n^{p-1},W_{n,k}]=
\sum_{\alpha=1}^{p-1}\binom{p-1}{\alpha}
\varrho_k^\alpha W_{n,k}^{\alpha,0}D_x^{p-1-\alpha}
\eeqs
and therefore, by \eqref{j2}, \eqref{j22}, \eqref{j222}, the
Calder\'on-Vaillancourt's Theorem \ref{thC} and \eqref{AB6}, for every
$\nu\in\N$ there are constants $C,C'_\nu,C''_\nu>0$ such that:
\beqs
\label{normaj2}
\nonumber
J_2&\leq &\|\left(1-\chi_{1,k}(D_x)\right)W_{n,k}(D_x^{p-1}-n^{p-1})u_k\|\\
\nonumber
&&+\sum_{\alpha=1}^{p-1}\binom{p-1}{\alpha}\varrho_k^\alpha
\|(1-\chi_{1,k}(D_x))W_{n,k}^{\alpha,0}D_x^{p-1-\alpha}u_k\|
\\
\nonumber
&\leq & C\bigg(\left |\sigma\left(\left(1-\chi_{1,k}(D_x)\right)
W_{n,k}(D_x^{p-1}-n^{p-1})\right)\right |_{2,2}^{0}\\
\nonumber
&&+\sum_{\alpha=1}^{p-1}\varrho_k^\alpha
\left |\sigma\left(\left(1-\chi_{1,k}(D_x)\right)W_{n,k}^{\alpha,0}D_x^{p-1-\alpha}
\right)
\right |_{2,2}^{0}\bigg)\|u_k\|
\\
\nonumber
&\leq& C'_\nu\varrho_k^{4+\frac12}\left(\frac{\varrho_k^{\mu+1}}{n}\right)^\nu
(n^{p-1}+\varrho_k n^{p-2}+\ldots+\varrho_k^{p-1})n^q
\\
&\leq& C''_\nu\varrho_k^{4+\frac12}\left(\frac{\varrho_k^{\mu+1}}{n}\right)^\nu 
n^{q+p-1}.
\eeqs
Substituting now \eqref{normaj1} and \eqref{normaj2} 
in \eqref{normaI2vk} we come to
\beqs\label{3.7}
\|I_2v_k\|\leq C\ds\frac{n^{p-1}}{\varrho_k^\mu}\|v_k\|+
C_\nu\varrho_k^{4+\frac12}\left(\frac{\varrho_k^{\mu+1}}{n}\right)^\nu n^{q+p-1}
\eeqs
for some $C,C_\nu>0$, and hence
\beqs\label{I2vk}
\langle I_2v_k,v_k\rangle\geq -C\ds\frac{n^{p-1}}{\varrho_k^\mu}\|v_k\|^2
-C_\nu\varrho_k^{4+\frac12}\left(\frac{\varrho_k^{\mu+1}}{n}\right)^\nu n^{q+p-1}
\|v_k\|.
\eeqs
Finally, to estimate $\langle I_3v_k,v_k\rangle$, we localize in a
neighborhood of $x_k+pA_p(t)\xi^{p-1}$ by defining,
for $h$ as 
in \eqref{h}, the function 
\beqs\label{chi2k}
\chi_{2,k}(x):= h\left(\varrho_k\frac{x-x_k-pA_p(t)\xi^{p-1}}{4pc_p}\right),
\eeqs 
where $c_p$ is the constant defined in Lemma \ref{lemma2}.
We have that
\beqs\label{suppchi2k}
\supp \chi_{2,k}\subseteq \left\{x\ :\ \vert x-x_k-pA_p(t)\xi^{p-1}\vert\leq 
\frac{2pc_p}{\varrho_k}\right\}
\eeqs
and
\beqs\label{supp1-chi2k}
\supp \left(1-\chi_{2,k}\right)\subseteq \left\{x\ :\ 
\vert\ x-x_k-pA_p(t)\xi^{p-1}\vert\geq \frac{pc_p}{\varrho_k}\right\}.
\eeqs
We now claim that 
\beqs\label{yuppi}
\supp (1-\chi_{2,k})\cap\supp W_{n,k}^{\alpha,\beta}=\emptyset
\qquad\forall t\in 
\left[0,\frac{\varrho_k}{n^{p-1}}\right].
\eeqs
This holds true because on the support of 
$w_{n,k}^{\alpha,\beta}$, given by Lemma \ref{lemma2}, we have that, for all 
$t\in \left[0,\ds\frac{\varrho_k}{n^{p-1}}\right]$,
\beqsn
\vert x-x_k-pA_p(t)\xi^{p-1}\vert&\leq&\vert x-x_k-pA_p(t)n^{p-1}\vert
+p|A_p(t)|\vert \xi^{p-1}-n^{p-1}\vert
\\
&\leq& \frac{c_p}{\varrho_k}+p\sup_{[0,T]}|a_p|\cdot t\cdot |\xi-n|
\cdot|\xi^{p-2}+n\xi^{p-3}+\ldots+n^{p-2}|
\\
&\leq&  \frac{c_p}{\varrho_k}+c_p \frac{\varrho_k}{n^{p-1}}
\frac{n}{2\varrho_k^\mu}(p-1)n^{p-2}
\leq p\frac{c_p}{\varrho_k},
\eeqsn
by the definition of $c_p$. 
Therefore \eqref{yuppi} is proved and
\beqsn
I_3v_k=(1-\chi_{2,k}(x))I_3v_k+\chi_{2,k}(x)I_3v_k
=\chi_{2,k}(x)I_3v_k.
\eeqsn

Then, by Lemma \ref{lemmaAB3}:
\beqsn
\|I_3v_k\|=&&\|\chi_{2,k}(x)I_3v_k\|=
|\Im a_{p-1}(t,x)-\Im a_{p-1}(t,x_k +pA_p(t)n^{p-1})|
\cdot\|\chi_{2,k}(x)D_x^{p-1}v_k\|
\\
\leq &&\Big(\sup_{[0,T]\times\R} |\Im \partial_xa_{p-1}(t,x)|\Big)
\cdot |x-x_k-pA_p(t)n^{p-1}|
\cdot\|\chi_{2,k}(x)D_x^{p-1}v_k\|\\
\leq&&\frac{c}{\varrho_k}\|D_x^{p-1}v_k\|
\leq\frac{c}{\varrho_k}\left(c_1n^{p-1}\|v_k\|+C_\nu\varrho_k^{4+\frac12}
\left(\frac{\varrho_k^{\mu+1}}{n}\right)^\nu n^{q+p-1}\right)
\\
\leq&& C'\frac{\,n^{p-1}}{\varrho_k}\|v_k\|+C'_\nu{\varrho_k}^{3+\frac12}
\left(\frac{{\varrho_k}^{\mu+1}}{n}\right)^\nu n^{q+p-1},
\eeqsn
for some $c,C',C'_\nu>0$,
and so
\beqs
\label{I3vk}
\langle I_3v_k,v_k\rangle\geq -C'\frac{n^{p-1}}{\varrho_k}\|v_k\|^2
-C'_\nu{\varrho_k}^{3+\frac12}\left(\frac{{\varrho_k}^{\mu+1}}{n}\right)^\nu 
n^{q+p-1}\|v_k\|.
\eeqs
Summing up \eqref{I1vk}, \eqref{I2vk} and \eqref{I3vk} 
we finally get the desired estimate \eqref{AB8}.
\end{proof}

\begin{Prop}
\label{lemma24}
Let $n=\varrho_k^{a}$ with $a>\mu+1$. Then for all $\nu\in\N$ 
there exists $C_\nu>0$ such that
the function $f_k$
defined in \eqref{fk} satisfies 
\beqsn
\|f_k(t,\cdot)\|\leq&& C\varrho_k^2n^{p-2}\sum_{j=1}^p\|v_k^{j,0}\|+C_\nu
n^{p-1}\!\!
\sum_{1\leq\alpha_1+\alpha_2\leq \nu(p-1)}\!\left(\frac{\varrho_k^\mu}{n}
\right)^{\alpha_1+\alpha_2}\|v_k^{\alpha_1,\alpha_2}\|\\
&&+C_\nu\,n^{q+p-1}\varrho_k^{4+\frac12}
\left(\frac{\varrho_k^{\mu+1}}{n}\right)^{\nu}
\eeqsn
for some fixed $C>0$ and for every $t\in[0,\varrho_k/n^{p-1}]$ with
$k$ large enough.
\end{Prop}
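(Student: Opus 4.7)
The plan is to estimate the two summands of $f_k$ separately, using the auxiliary estimates of Section~\ref{sec2}. I would first write
\beqsn
f_k = F_k^{(1)} + F_k^{(2)},\qquad F_k^{(1)}:=\op\!\Big(a_p(t)\!\sum_{\alpha=2}^p\binom p\alpha\xi^{p-\alpha}D_x^\alpha w_{n,k}\Big)u_k,\quad F_k^{(2)}:=\sum_{j=0}^{p-1}[a_j,W_{n,k}]D_x^ju_k,
\eeqsn
and show that $F_k^{(1)}$ accounts for the first (\emph{principal}) term on the right-hand side of the claimed bound, while $F_k^{(2)}$ accounts for the second (\emph{commutator}) term; the remainder terms will merge into a common error.

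For $F_k^{(1)}$, the key observation is that by \eqref{wnkab} one has $D_x^\alpha w_{n,k}=\varrho_k^\alpha w_{n,k}^{\alpha,0}$, and since the factor $\xi^{p-\alpha}$ depends only on~$\xi$, one can rewrite
\beqsn
F_k^{(1)}=a_p(t)\sum_{\alpha=2}^p\binom{p}{\alpha}\varrho_k^\alpha W_{n,k}^{\alpha,0}D_x^{p-\alpha}u_k.
\eeqsn
Applying Lemma~\ref{lemmaAB2} with $\sigma=\alpha$, $\gamma=0$, $\lambda=p-\alpha$ to each term yields
\beqsn
\varrho_k^\alpha\|W_{n,k}^{\alpha,0}D_x^{p-\alpha}u_k\|\leq C\sum_{j=0}^{p-\alpha}\varrho_k^{\alpha+j}n^{p-\alpha-j}\|v_k^{\alpha+j,0}\|+C_{\alpha,\nu}\varrho_k^{\alpha+4+\frac12}\Big(\frac{\varrho_k^{\mu+1}}{n}\Big)^{\!\nu}n^{p-\alpha+q}.
\eeqsn
Setting $\ell=\alpha+j$, the main term factors as $\varrho_k^2 n^{p-2}\cdot(\varrho_k/n)^{\ell-2}\|v_k^{\ell,0}\|$ with $2\leq\ell\leq p$; since $a>\mu+1\geq 3$ forces $\varrho_k\leq n$, this is controlled by $C\varrho_k^2 n^{p-2}\sum_{j=1}^p\|v_k^{j,0}\|$. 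The error term carries an extra $\varrho_k^\alpha n^{-\alpha+1}\leq\varrho_k^{p}n^{-p+1}\leq 1$ (again by $a>\mu+1$) compared with $\varrho_k^{4+\frac12}(\varrho_k^{\mu+1}/n)^\nu n^{q+p-1}$, so it fits under the desired remainder envelope.

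For $F_k^{(2)}$, I would apply Lemma~\ref{lemmaAB4} directly to each summand and then sum on $j\in\{0,\dots,p-1\}$. Using $n^j\leq n^{p-1}$ and the fact that the index range $1\leq\alpha_1+\alpha_2\leq(\nu-1)(p-1)+j$ is contained in $1\leq\alpha_1+\alpha_2\leq\nu(p-1)$ once $j=p-1$, one obtains exactly the commutator term
\beqsn
C_\nu n^{p-1}\!\!\sum_{1\leq\alpha_1+\alpha_2\leq\nu(p-1)}\!\!\Big(\frac{\varrho_k^\mu}{n}\Big)^{\alpha_1+\alpha_2}\!\|v_k^{\alpha_1,\alpha_2}\|,
\eeqsn
plus a remainder $C_\nu\varrho_k^{4+\frac12}(\varrho_k^{\mu+1}/n)^\nu n^{q+p-1}$ that coincides with the last term of the claimed estimate.

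The only mildly delicate point is the bookkeeping in the first piece: one must verify that the auxiliary factors $\varrho_k^\alpha$ appearing in $F_k^{(1)}$ do not exceed the global weight $\varrho_k^2 n^{p-2}$ once combined with $n^{p-\alpha-j}$, and similarly that the auxiliary error term $\varrho_k^\alpha n^{p-\alpha+q}$ is dominated by $n^{p-1+q}$. Both reductions rely precisely on the hypothesis $a>\mu+1$, which is the reason why this proposition asks for a strictly stronger growth exponent than the estimates in Lemmas~\ref{lemmaAB3}--\ref{lemmaAB4}. Once these comparisons are in place, the three contributions assemble into the stated inequality.
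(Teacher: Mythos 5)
Your overall strategy is the same as the paper's (split $f_k$ into the two summands of \eqref{fk}, use the factorization $D_x^\alpha w_{n,k}=\varrho_k^\alpha w_{n,k}^{\alpha,0}$ together with Lemma~\ref{lemmaAB2} for the first piece, and Lemma~\ref{lemmaAB4} for the commutators), and your treatment of $F_k^{(1)}$ is essentially correct. There is, however, one genuine gap in your decomposition: the second summand of $f_k$ in \eqref{fk} is $\sum_{j=0}^{p-1}[a_j(t,x)D_x^j,W_{n,k}]u_k$, i.e.\ the commutator of the \emph{whole} operator $a_jD_x^j$ with $W_{n,k}$, whereas you set $F_k^{(2)}=\sum_{j=0}^{p-1}[a_j,W_{n,k}]D_x^ju_k$ and feed it straight into Lemma~\ref{lemmaAB4}. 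These two objects differ by
\beqsn
[a_jD_x^j,W_{n,k}]u_k-[a_j,W_{n,k}]D_x^ju_k
= a_j\,[D_x^j,W_{n,k}]u_k
= a_j\sum_{h=0}^{j-1}\binom jh\varrho_k^{j-h}W_{n,k}^{j-h,0}D_x^hu_k,
\eeqsn
so with your splitting $F_k^{(1)}+F_k^{(2)}\neq f_k$ and the proof as written does not close. The omitted piece is not negligible a priori: it must be estimated via Lemma~\ref{lemmaAB2} (with $\sigma=j-h$, $\lambda=h$), which produces a contribution of order $C\varrho_k n^{j-1}\sum_{s=1}^j\|v_k^{s,0}\|$ plus a remainder $C_\nu\varrho_k^{4+\frac12}(\varrho_k^{\mu+1}/n)^\nu n^{q+j}$. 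Since $\varrho_k n^{j-1}\leq \varrho_k^2 n^{p-2}$ and $n^{q+j}\leq n^{q+p-1}$, both are absorbed into the first and third terms of the claimed bound, so the proposition survives — but this step has to be carried out explicitly, and in particular it explains why the first sum in the statement starts at $j=1$ rather than $j=2$ (your $F_k^{(1)}$ alone only produces $\|v_k^{j,0}\|$ with $j\geq2$).

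A minor additional remark on your bookkeeping for $F_k^{(1)}$: the quantity $\varrho_k^\alpha n^{1-\alpha}=n\,(\varrho_k/n)^\alpha$ is \emph{decreasing} in $\alpha$, so the extremal case is $\alpha=2$, giving $\varrho_k^2/n\leq1$ (which already holds for $a\geq2$); your chain $\varrho_k^\alpha n^{-\alpha+1}\leq\varrho_k^pn^{-p+1}$ goes the wrong way, although the conclusion $\leq1$ is still true. Note also that for this proposition the condition $a\geq2$ suffices for these absorptions; the strict inequality $a>\mu+1$ is only needed later, in \eqref{sup}, to kill the remainder.
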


\begin{proof}
Let us recall that
\beqs
\label{addendi}
f_k=&&\op\left(a_p(t)\sum_{\alpha=2}^p
\binom p\alpha \xi^{p-\alpha}D_x^\alpha w_{n,k}\right)u_k
+\sum_{j=0}^{p-1}[a_j(t,x)D_x^j,W_{n,k}]u_k,
\eeqs
and estimate the above terms  separately.
For $\alpha=p$
\beqsn
\op(a_p(t)D_x^pw_{n,k})u_k=&&\int e^{ix\xi}a_p(t)D_x^pw_{n,k}(t,x;\xi)
\hat{u}_k(t,\xi)\dbar\xi\\
=&&a_p(t)\varrho_k^p\int e^{ix\xi}w_{n,k}^{p,0}(t,x;\xi)\hat{u}_k(t,\xi)\dbar\xi\\
=&&a_p(t)\varrho_k^pW_{n,k}^{p,0}(t,x;D_x)u_k(t,x)
=a_p(t)\varrho_k^pv_k^{p,0}(t,x)
\eeqsn
and hence
\beqs
\label{A9}
\|\op(a_p(t)D_x^pw_{n,k})u_k\|\leq C\varrho_k^p\|v_k^{p,0}\|
\eeqs
for some $C>0$.

For $2\leq\alpha\leq p-1$, by \eqref{2.28} we have:
\beqs
\nonumber
\|\op(a_p(t)\xi^{p-\alpha} D_x^\alpha w_{n,k})u_k(t,\cdot)\|\leq
&&C\varrho_k^\alpha\|W_{n,k}^{\alpha,0}D_x^{p-\alpha}u_k\|\\
\nonumber
\leq&&C'\varrho_k^\alpha\left(n^{p-\alpha}\sum_{j=0}^{p-\alpha}\|v_k^{\alpha+j,0}\|
+C_\nu\varrho_k^{4+\frac12}
\left(\frac{\varrho_k^{\mu+1}}{n}\right)^\nu n^{q+p-\alpha}\right)\\
\label{A1}
\leq&& C''\varrho_k^2n^{p-2}\sum_{s=2}^p\|v_k^{s,0}\|
+C'_\nu\varrho_k^{4+\frac12}
\left(\frac{\varrho_k^{\mu+1}}{n}\right)^\nu n^{q+p-1}
\eeqs
for some $C,C',C'',C_\nu,C'_\nu>0$, since 
$(\varrho_k/n)^\alpha\leq(\varrho_k/n)^2$ and 
$\varrho_k^2/n^2\leq1/n=\varrho_k^{-a}$ for
$2\leq\alpha\leq p-1$ and $a\geq2$.

In order to estimate the second addend of \eqref{addendi} we compute,
for $0\leq j\leq p-1$:
\beqsn
[a_jD_x^j,W_{n,k}]u_k
=&&a_j\sum_{h=0}^j\binom jh(D_x^{j-h}W_{n,k})D_x^hu_k
-W_{n,k}a_jD_x^ju_k\\
=&&a_j\sum_{h=0}^{j-1}\binom jh\varrho_k^{j-h}W_{n,k}^{j-h,0}D_x^hu_k
+[a_j,W_{n,k}]D_x^ju_k.
\eeqsn
Then, by Lemmas \ref{lemmaAB2} and \ref{lemmaAB4}, for $0\leq j\leq p-1$,
we have that:
\beqs
\nonumber
\|[a_jD_x^j,W_{n,k}]u_k\|\leq&&
C\sum_{h=0}^{j-1}\varrho_k^{j-h}\|W_{n,k}^{j-h,0}D_x^hu_k\|
+\|[a_j,W_{n,k}]D_x^ju_k\|\\
\nonumber
\leq&&\sum_{h=0}^{j-1}C_h\varrho_k^{j-h}n^h\sum_{s=0}^h\|v_k^{j-h+s,0}\|
+C_\nu \varrho_k^{4+\frac12}
\left(\frac{\varrho_k^{\mu+1}}{n}\right)^\nu n^{q+j}
\\
\nonumber
&&+C_\nu n^j\sum_{1\leq\alpha_1+\alpha_2\leq(\nu-1)(p-1)+j}
\left(\frac{\varrho_k^\mu}{n}\right)^{\alpha_1+\alpha_2}
\|v_k^{\alpha_1,\alpha_2}\|
\\
\label{A3}
\leq&&C\varrho_k n^{j-1}\sum_{s=1}^j\|v_k^{s,0}\|
+C_\nu \varrho_k^{4+\frac12}
\left(\frac{\varrho_k^{\mu+1}}{n}\right)^\nu n^{q+j}\\
\nonumber
&&+C_\nu n^j
\sum_{1\leq\alpha_1+\alpha_2\leq(\nu-1)(p-1)+j}
\left(\frac{\varrho_k^\mu}{n}\right)^{\alpha_1+\alpha_2}
\|v_k^{\alpha_1,\alpha_2}\|
\eeqs
for some $C,C_\nu>0$.

By \eqref{addendi}, \eqref{A9}, \eqref{A1} and \eqref{A3}:
\beqsn
\|f_k(t,\cdot)\|\leq&&
C\varrho_k^p\|v_k^{p,0}\|+C''\varrho_k^2n^{p-2}\sum_{s=2}^p\|v_k^{s,0}\|
+C'_\nu\varrho_k^{4+\frac12}
\left(\frac{\varrho_k^{\mu+1}}{n}\right)^\nu n^{q+p-1}\\
&&+\sum_{j=0}^{p-1}\bigg[C\varrho_kn^{j-1}\sum_{s=1}^j\|v_k^{s,0}\|
+C'_\nu n^j
\sum_{1\leq\alpha_1+\alpha_2\leq(\nu-1)(p-1)+j}
\left(\frac{\varrho_k^\mu}{n}\right)^{\alpha_1+\alpha_2}
\|v_k^{\alpha_1,\alpha_2}\|\\
&&+C'_\nu\varrho_k^{4+\frac12}
\left(\frac{\varrho_k^{\mu+1}}{n}\right)^\nu n^{q+j}\bigg]\\
\leq&&\tilde{C}\varrho_k^2n^{p-2}\sum_{s=1}^p\|v_k^{s,0}\|
+\tilde{C}_\nu n^{p-1}\sum_{1\leq\alpha_1+\alpha_2\leq\nu(p-1)}
\left(\frac{\varrho_k^\mu}{n}\right)^{\alpha_1+\alpha_2}
\|v_k^{\alpha_1,\alpha_2}\|\\
&&+\tilde{C}_\nu\varrho_k^{4+\frac12}
\left(\frac{\varrho_k^{\mu+1}}{n}\right)^\nu n^{q+p-1}
\eeqsn
for some $\tilde{C},\tilde{C}_\nu>0$.
\end{proof}

Summing up, from \eqref{encontrary2}, by Propositions \ref{ima2} and
\ref{lemma24}, for every $\nu\in\N$ we come to the estimate:
\beqs
\label{lemma3.1quaderno}
\nonumber
\frac12\frac{d}{dt}\|v_k(t,\cdot)\|^2&\geq&
\left( \Im a_{p-1}(t,x_k+pA_p(t)n^{p-1})n^{p-1}-
A\left(1+\frac{n^{p-1}}{\varrho_k}\right)\right)\|v_k\|^2
\\
&&-C_\nu{\varrho_k}^{4+\frac12}\left(\frac{{\varrho_k}^{\mu+1}}{n}\right)^\nu 
n^{q+p-1}\|v_k\|
-C\varrho_k^2n^{p-2}\sum_{j=1}^p\|v_k^{j,0}\|\cdot\|v_k\|\\
\nonumber
&&-C_\nu n^{p-1}
\sum_{1\leq\alpha_1+\alpha_2\leq \nu(p-1)}\!\left(\frac{\varrho_k^\mu}{n}
\right)^{\alpha_1+\alpha_2}
\|v_k^{\alpha_1,\alpha_2}\|\cdot\|v_k\|
\eeqs
for some $A,C,C_\nu>0$.
Now, for $a>\mu+1$, it is possible to take $\nu\in\N$ 
sufficiently large so that
\beqs
\label{sup}
\sup_k {\varrho_k}^{4+\frac12}\left(\frac{{\varrho_k}^{\mu+1}}{n}\right)^\nu 
n^{q+p-1}\leq M_\nu
\eeqs
for some $M_\nu>0$.
After substituting \eqref{sup} in \eqref{lemma3.1quaderno},
we finally choose $a$ and $\mu$ such that 
\beqs\label{postsup}
\frac{d}{dt}\|v_k(t,\cdot)\|&\geq&
\left( \Im a_{p-1}(t,x_k+pA_p(t)n^{p-1})n^{p-1}-
A\left(1+\frac{n^{p-1}}{\varrho_k}\right)\right)\|v_k\|- M'_\nu
\\
\nonumber
&&-C'_\nu n^{p-1}
\sum_{1\leq\alpha_1+\alpha_2\leq \nu(p-1)}\left(\frac{\varrho_k^\mu}{n}
\right)^{\alpha_1+\alpha_2}
\|v_k^{\alpha_1,\alpha_2}\|,
\eeqs
for some $M'_\nu,C'_\nu>0$; this can be done for
\beqs
\label{sceltamu}
\begin{cases}
\mu>p+1\cr
\ds\mu+1<a\leq\frac{p\mu-2}{p-1}=\mu+1+\frac{\mu-p-1}{p-1},
\end{cases}
\eeqs
since
$\varrho_k^2 n^{p-2}\leq n^{p-1}\left(\frac{\varrho_k^\mu}{n}
\right)^j$ for all $1\leq j\leq p$ if
$2\leq p\mu-a(p-1)$, and this implies, together with $a>\mu+1$, that 
we must take $\mu>p+1$.

Using now $\varrho_k\left(\frac{\varrho_k^\mu}{n}\right)^{\alpha_1+\alpha_2}
\leq \left(\frac{\varrho_k^{\mu+1}}{n}\right)^{\alpha_1+\alpha_2}$, we 
come to
\beqsn
\frac{d}{dt}\|v_k(t,\cdot)\|&\geq&\left( \Im a_{p-1}(t,x_k+pA_p(t)n^{p-1})n^{p-1}
-A\left(1+\frac{n^{p-1}}{\varrho_k}\right)\right)\|v_k\|- M'
\\
&&-C'\frac{n^{p-1}}{\varrho_k}
\sum_{1\leq\alpha_1+\alpha_2\leq \nu(p-1)}\!\left(\frac{\varrho_k^{\mu+1}}{n}
\right)^{\alpha_1+\alpha_2}
\|v_k^{\alpha_1,\alpha_2}\|
\eeqsn
for some constants $M',C'>0$, since $\nu$ has been fixed in \eqref{sup}.

Arguing in the same way for the functions $v_k^{\alpha,\beta}$ instead of $v_k$,
we finally get:
\begin{Prop}
\label{prop34}
Let $n$ be as in \eqref{n}, $a,\mu$ as in \eqref{sceltamu}, $\nu\in\N$
sufficiently large so that \eqref{sup} is satisfied. Then, for every
$\alpha, \beta\in\N_0$ there exists $C_{\alpha,\beta}>0$ such that
for all $t\in[0,\varrho_k/n^{p-1}]$ with $k$ large enough:
\beqs
\label{belowderivatives}
\frac{d}{dt}\|v_k^{\alpha,\beta}(t,\cdot)\|&\geq&
\left( \Im a_{p-1}(t,x_k+pA_p(t)n^{p-1})n^{p-1}-
A\left(1+\frac{n^{p-1}}{\varrho_k}
\right)\right)\|v_k^{\alpha,\beta}\|- C_{\alpha,\beta}
\\
\nonumber
&&-C_{\alpha,\beta}\frac{n^{p-1}}{\varrho_k}
\sum_{1\leq\tilde\alpha+\tilde \beta\leq \nu(p-1)}\!\left(\frac{\varrho_k^{\mu+1}}{n}
\right)^{\tilde\alpha+\tilde\beta}
\|v_k^{\alpha+\tilde\alpha,\beta+\tilde\beta}\|.
\eeqs
\end{Prop}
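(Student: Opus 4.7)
The plan is to repeat verbatim the energy argument (3.5)--(3.14) that led to the estimate for $\|v_k\|$, now with $v_k^{\alpha,\beta}$ in place of $v_k = v_k^{0,0}$. The key structural fact that makes this possible is that the symbol $w_{n,k}^{\alpha,\beta}$ defined in \eqref{wnkab} \emph{still} satisfies the transport equation $(\partial_t + pa_p(t)\xi^{p-1}\partial_x) w_{n,k}^{\alpha,\beta} = 0$. Indeed $w_{n,k}^{\alpha,\beta}$ depends on $t$ only through the factor $h^{(\alpha)}(\varrho_k(x - x_k - pA_p(t)\xi^{p-1}))$, and the same cancellation as in \eqref{215} occurs. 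Consequently the symbol of $[D_t + a_p(t) D_x^p, W_{n,k}^{\alpha,\beta}]$ retains only the terms of order $\geq 2$ coming from $\sigma([D_x^p, W_{n,k}^{\alpha,\beta}])$, exactly as for $W_{n,k}$ itself.

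Setting $f_k^{\alpha,\beta} := P v_k^{\alpha,\beta} = [P, W_{n,k}^{\alpha,\beta}]u_k$ (using $P u_k = 0$), we obtain the analogue of \eqref{fk}:
\[
f_k^{\alpha,\beta} = \op\!\left(a_p(t)\sum_{\gamma=2}^p \binom{p}{\gamma}\xi^{p-\gamma} D_x^\gamma w_{n,k}^{\alpha,\beta}\right)u_k + \sum_{j=0}^{p-1}[a_j(t,x)D_x^j, W_{n,k}^{\alpha,\beta}]u_k.
\]
Proposition \ref{lemma24} extends to $f_k^{\alpha,\beta}$: Lemmas \ref{lemmaAB2} and \ref{lemmaAB4} are already stated for arbitrary upper indices, and their dependence on these indices enters only through the seminorm bounds of Lemma \ref{lemmaAB1}, which are uniform in $(\alpha,\beta)$ up to a constant $C_{\alpha,\beta}$. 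The resulting estimate has the same shape as in Proposition \ref{lemma24}, with each $v_k^{\tilde\alpha,\tilde\beta}$ replaced by $v_k^{\alpha+\tilde\alpha,\beta+\tilde\beta}$.

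The estimate of $\langle \Im a_{p-1}(t,x)D_x^{p-1}v_k^{\alpha,\beta}, v_k^{\alpha,\beta}\rangle$ is then obtained by the three-piece splitting of Proposition \ref{ima2}. This is legitimate because Lemma \ref{lemma2} localizes $\supp w_{n,k}^{\alpha,\beta}$ in a neighborhood of $(x_k + pA_p(t)n^{p-1}, n)$ of size \emph{independent} of $(\alpha,\beta)$, so the cut-offs $\chi_{1,k}$ and $\chi_{2,k}$ continue to satisfy \eqref{intvuota} and \eqref{yuppi}. The lower-order contributions $\langle\Im a_h D_x^h v_k^{\alpha,\beta}, v_k^{\alpha,\beta}\rangle$ for $h \leq p-2$ and the adjoint corrections in \eqref{encontrary2} are controlled as in the $(0,0)$ case, yielding again the $\frac{n^{p-1}}{\varrho_k}\|v_k^{\alpha,\beta}\|^2$ remainder. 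Performing the same choice of $\nu$ and $(\mu,a)$ as in \eqref{sup}--\eqref{sceltamu}, we arrive at \eqref{belowderivatives}.

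The main obstacle is purely one of bookkeeping: one must verify that every extra $x$- or $\xi$-derivative of $w_{n,k}^{\alpha,\beta}$ produced by the expansion formulas of Theorems \ref{thA} and \ref{thB} translates into a superscript shift $(\alpha,\beta)\mapsto(\alpha+\tilde\alpha,\beta+\tilde\beta)$ with $1 \leq \tilde\alpha + \tilde\beta \leq \nu(p-1)$, and that the coefficients $A$ and those of the $\frac{n^{p-1}}{\varrho_k}$-type terms remain independent of $(\alpha,\beta)$ (only the additive constant absorbing \eqref{sup} and the prefactor of the sum pick up a dependence $C_{\alpha,\beta}$). Since Lemmas \ref{lemmaAB1}--\ref{lemmaAB4} have been proved with general upper indices precisely for this purpose, this verification is direct.
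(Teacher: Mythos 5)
Your proposal is correct and follows exactly the route the paper intends: the paper's entire ``proof'' of this proposition is the sentence ``Arguing in the same way for the functions $v_k^{\alpha,\beta}$ instead of $v_k$,'' and you supply precisely the two facts that justify the repetition, namely that $w_{n,k}^{\alpha,\beta}$ solves the same transport equation \eqref{215} (so the commutator with $D_t+a_p(t)D_x^p$ again starts at order two) and that the support localization of Lemma \ref{lemma2} and the seminorm bounds of Lemmas \ref{lemmaAB1}--\ref{lemmaAB4} are uniform in the upper indices up to constants $C_{\alpha,\beta}$. The only nitpick is that Lemma \ref{lemmaAB4} is literally stated only for $W_{n,k}=W_{n,k}^{0,0}$, so its extension to $W_{n,k}^{\alpha,\beta}$ (with the index shifts you describe) is part of the routine bookkeeping rather than something ``already stated,'' but this does not affect the validity of the argument.
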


From Proposition \ref{prop34} it follows that:
\beqs
\label{bohh}
\nonumber
\frac{d}{dt}\left(\left(\frac{\varrho_k^{\mu+1}}n\right)^{\alpha+\beta}\!\!
\|v_k^{\alpha,\beta}\|\right)&\geq&\bigg( \Im a_{p-1}(t,x_k+pA_p(t)n^{p-1})n^{p-1}
-A\left(1+\frac{n^{p-1}}{\varrho_k}\right)\bigg)\\
&&\cdot
\left(\frac{\varrho_k^{\mu+1}}n
\right)^{\alpha+\beta}\!\!\|v_k^{\alpha,\beta}\|-C_{\alpha,\beta}
\\
\nonumber
&&-C_{\alpha,\beta}\frac{n^{p-1}}{\varrho_k}
\sum_{1\leq\tilde\alpha+\tilde \beta\leq \nu(p-1)}\!\left(\frac{\varrho_k^{\mu+1}}{n}
\right)^{\alpha+\tilde\alpha+\beta+\tilde\beta}
\|v_k^{\alpha+\tilde\alpha,\beta+\tilde\beta}\|.
\eeqs
We now choose $s\in\N$ sufficiently large so that, for all 
$\bar{\alpha}+\bar{\beta}\geq s+1$, using \eqref{normevkab}
and $a>\mu+1$, we have
\beqs
\label{costanti}
\frac{n^{p-1}}{\varrho_k}\left(\frac{\varrho_k^{\mu+1}}{n}
\right)^{\bar\alpha+\bar\beta}
\|v_k^{\bar\alpha,\bar\beta}\|\leq 
c_s\frac{n^{p-1}}{\varrho_k}\left(\frac{\varrho_k^{\mu+1}}{n}
\right)^{s+1}\varrho_k^{\frac 12+2}n^q\leq c'_s
\eeqs
for some $c_s,c'_s>0$. In order to satisfy \eqref{costanti}
it's enough to take $s$ such that
\beqsn
a(q+p-1)+\frac 12+1+(s+1)(\mu+1-a)\leq 0,
\eeqsn
i.e.
\beqs\label{chooses}
s\geq\frac{a(q+p-2)+\mu+\frac 52}{a-\mu-1}.
\eeqs
With this choice of $s$ we define:
\beqs\label{sigmak}
\sigma_k(t):=\ds\sum_{0\leq\alpha+\beta\leq s}\left(\frac{\varrho_k^{\mu+1}}{n}
\right)^{\alpha+\beta}
\|v_k^{\alpha,\beta}\|.
\eeqs
From \eqref{bohh} we have that: 
\beqsn
\frac{d}{dt}\sigma_k(t)&=&
\ds\sum_{0\leq\alpha+\beta\leq s}\frac{d}{dt}
\left[\left(\frac{\varrho_k^{\mu+1}}{n}
\right)^{\alpha+\beta}
\|v_k^{\alpha,\beta}\|\right]
\\
&\geq &\ds\sum_{0\leq\alpha+\beta\leq s}
\left( \Im a_{p-1}(t,x_k+pA_p(t)n^{p-1})n^{p-1}
-A\left(1+\frac{n^{p-1}}{\varrho_k}\right)\right)
\left(\frac{\varrho_k^{\mu+1}}n
\right)^{\alpha+\beta}\|v_k^{\alpha,\beta}\| \\
&&
-C_s\sum_{1\leq\bar\alpha+\bar\beta\leq s}\frac{n^{p-1}}{\varrho_k}
\left(\frac{\varrho_k^{\mu+1}}n\right)^{\bar\alpha+\bar\beta}
\|v_k^{\bar\alpha,\bar\beta}\|-C_s\\
&\geq &\ds\left( \Im a_{p-1}(t,x_k+pA_p(t)n^{p-1})n^{p-1}
-A_s\left(1+\frac{n^{p-1}}{\varrho_k}\right)\right)
\sigma_k(t)-C_s
\eeqsn
for some $C_s,A_s>0$, because of \eqref{costanti}. 

We have thus obtained for the function $\sigma_k$ the following 
differential inequality:
\beqsn
\sigma_k'(t)-B_k(t)\sigma_k(t)+C_s\geq 0\qquad 
t\in\left[0,\ds\frac{\varrho_k}{n^{p-1}}\right],\ k\gg1,
\\
\nonumber
B_k(t):=\Im a_{p-1}(t,x_k+pA_p(t)n^{p-1})n^{p-1}-A_s
\left(1+\frac{n^{p-1}}{\varrho_k}\right),
\eeqsn
which clearly implies that
\beqsn
\sigma_k(t)\geq e^{\int_0^t B_k(\theta)d\theta}\left[\sigma_k(0)-C_s\ds
\int_0^t e^{-\int_0^\tau B_k(\theta)d\theta}d\tau\right]
\qquad 
t\in\left[0,\ds\frac{\varrho_k}{n^{p-1}}\right], \ k\gg1.
\eeqsn
For $t=\varrho_k/n^{p-1}$ we have
\beqs\label{frombelow!!!}
\sigma_k\left(\frac{\varrho_k}{n^{p-1}}\right)\geq e^{\int_0^{\frac{\varrho_k}{n^{p-1}}} 
B_k(\theta)d\theta}\left[\sigma_k(0)-C_s\ds\int_0^{\frac{\varrho_k}{n^{p-1}}} 
e^{-\int_0^\tau B_k(\theta)d\theta}d\tau\right].
\eeqs
Let us focus on the term $\int_0^{\frac{\varrho_k}{n^{p-1}}} B_k(\theta)d\theta$; 
the choice of $x_k,\varrho_k$ of Lemma \ref{lemma1} gives for it, by
the change of variables $\theta'=n^{p-1}\theta$ and for $k$ large enough, the 
following estimate from below:

\beqs
\label{okbelow}
\nonumber
\int_0^{\frac{\varrho_k}{n^{p-1}}} B_k(\theta)d\theta&=&\int_0^{\frac{\varrho_k}{n^{p-1}}}
\Im a_{p-1}(\theta,x_k+pA_p(\theta)n^{p-1})n^{p-1}d\theta-
A_s\int_0^{\frac{\varrho_k}{n^{p-1}}}
\left(1+\frac{n^{p-1}}{\varrho_k}\right)d\theta
\\
\nonumber
&\geq &\int_0^{\varrho_k}\Im a_{p-1}\left(\frac{\theta'}{n^{p-1}},
x_k+pA_p\left(\frac{\theta'}{n^{p-1}}
\right)
n^{p-1}\right)d\theta'-2A_s
\\
\nonumber
 &= & \int_0^{\varrho_k}\Im a_{p-1}\left(\frac{\theta'}{n^{p-1}},
x_k+pa_p(\tau_k)\theta'\right)d\theta'-2A_s
\\
&\geq &M\log(1+\varrho_k)+k-2A_s,
\eeqs
for some $\tau_k\in[0,\theta'/n^{p-1}]$, since
$A_p(\theta'/n^{p-1})n^{p-1}=\theta'a_p(\tau_k)$
by the mean value theorem for integration.

Similarly it follows that for every $\tau\in [0,\frac{\varrho_k}{n^{p-1}}]$:
\beqs\label{okpos}
\int_0^{\tau} B_k(\theta)d\theta&\geq &
\int_0^{n^{p-1}\tau}
\Im a_{p-1}\left(\frac{\theta'}{n^{p-1}},
x_k+pa_p(\tau'_k)\theta'\right)d\theta'-2A_s\geq -2A_s
\eeqs
for some $\tau_k'\in[0,\theta'/n^{p-1}]$,
because of Lemma \ref{lemma1}, 
since $n^{p-1}\tau\leq n^{p-1}\frac{\varrho_k}{n^{p-1}}\leq
\varrho_k$.

Finally, from \eqref{sigmak} and \eqref{vk0} we have 
$\|\sigma_k(0)\|\geq \|v_k(0)\|\geq \|h\|>0$; therefore,
substituiting the estimates \eqref{okbelow} and \eqref{okpos} into 
\eqref{frombelow!!!}, we have proved the following 
desired estimate from below for the function $\sigma_k(t)$:
\begin{Prop}
\label{prop35}
For every $M>0$ and $k\in\N$ let $x_k,\varrho_k$  be as in Lemma~\ref{lemma1}.
Taking $\mu\geq 2$ in \eqref{eqw} and $n$ as in \eqref{n} with $a,\mu$
satisfying \eqref{sceltamu}, 
it is possible to construct the functions $v_k^{\alpha,\beta}$ in
\eqref{vkab} and then to choose $s$ 
great enough (see \eqref{chooses}) such that the function $\sigma_k(t)$ 
defined in \eqref{sigmak} satisfies the following estimate from below:
\beqs\label{formulabelow}
\sigma_k\left(\frac{\varrho_k}{n^{p-1}}\right)\geq c (1+\varrho_k)^{M},
\qquad k\gg1,
\eeqs
for some $c>0$.
\end{Prop}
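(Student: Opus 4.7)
The plan is to assemble the pieces already developed in the preceding discussion. The integral form \eqref{frombelow!!!} is the Duhamel-type consequence of the differential inequality $\sigma_k'(t) - B_k(t)\sigma_k(t) + C_s \geq 0$, so what remains is (i) to turn the two bounds \eqref{okbelow} and \eqref{okpos} on $\int B_k$ into bounds on the exponential factors appearing in \eqref{frombelow!!!}, and (ii) to verify that the bracket $\sigma_k(0) - C_s \int_0^{\varrho_k/n^{p-1}} e^{-\int_0^\tau B_k(\theta)\,d\theta}\, d\tau$ stays bounded below by a positive constant as $k \to +\infty$.

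From \eqref{okbelow} I would deduce
$$e^{\int_0^{\varrho_k/n^{p-1}} B_k(\theta)\, d\theta} \geq e^{-2A_s}\,e^k\,(1+\varrho_k)^M,$$
which is the factor responsible for the claimed growth in $\varrho_k$. On the other hand, \eqref{okpos} gives the uniform bound $e^{-\int_0^\tau B_k(\theta)\, d\theta} \leq e^{2A_s}$ for every $\tau\in[0,\varrho_k/n^{p-1}]$, so
$$C_s\int_0^{\varrho_k/n^{p-1}} e^{-\int_0^\tau B_k(\theta)\, d\theta}\, d\tau \leq C_s\, e^{2A_s}\,\frac{\varrho_k}{n^{p-1}} = C_s\, e^{2A_s}\,\varrho_k^{1-a(p-1)}.$$
Because $a > \mu+1 > p+2$ by \eqref{sceltamu}, the exponent $1-a(p-1)$ is strictly negative, so this upper bound tends to $0$ as $k \to +\infty$. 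Together with the already-noted lower bound $\sigma_k(0) \geq \|v_k(0,\cdot)\| \geq \|h\|$ coming from \eqref{vk0}, this guarantees that the bracket in \eqref{frombelow!!!} is at least $\|h\|/2$ for $k$ large enough.

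Plugging both estimates back into \eqref{frombelow!!!} and discarding the harmless factor $e^k\geq 1$ gives
$$\sigma_k\!\left(\frac{\varrho_k}{n^{p-1}}\right) \geq \frac{\|h\|}{2}\, e^{-2A_s}\,(1+\varrho_k)^M \geq c\,(1+\varrho_k)^M,$$
with $c := \tfrac12 e^{-2A_s}\|h\| > 0$, which is exactly \eqref{formulabelow}. No genuine obstacle arises at this stage: the delicate calibrations have already been carried out upstream, in the choice \eqref{sceltamu} of $\mu$ and $a$ (so that the various commutator remainders in Propositions \ref{ima2} and \ref{lemma24} get absorbed into the constants $A_s$ and $C_s$ of the differential inequality) and in the choice of $s$ via \eqref{chooses} (so that the higher-order tail terms of $\sigma_k$ contribute at most the additive constant $C_s$ in the differential inequality). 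Once these ingredients are in place, the final assembly is purely mechanical.
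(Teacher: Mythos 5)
Your proposal is correct and follows exactly the route the paper takes: it reads off the lower bound on the exponential prefactor from \eqref{okbelow}, uses \eqref{okpos} to bound the integrand in the bracket by $e^{2A_s}$ so that the integral term is $O(\varrho_k^{1-a(p-1)})\to 0$ (valid since $a>\mu+1>p+2$ forces $a(p-1)>1$), and combines this with $\sigma_k(0)\geq\|v_k(0,\cdot)\|\geq\|h\|>0$ from \eqref{vk0} to conclude. The only difference is that you spell out the absorption of the integral term into $\|h\|/2$, which the paper leaves implicit.
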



\section{Estimate from above and proof of the main Theorem. }
\label{sec4}

The estimate from above is now quite simple to be obtained and it is 
shown in the following:
\begin{Prop}
\label{prop41}
For every $M>0$ and $k\in\N$ let $x_k,\varrho_k$  be as in Lemma~\ref{lemma1}.
Taking $\mu\geq 2$ in \eqref{eqw} and $n$ as in \eqref{n} with $a,\mu$
satisfying \eqref{sceltamu}, 
it is possible to construct the functions $v_k^{\alpha,\beta}$ in
\eqref{vkab} and then to choose $s$ 
great enough (see \eqref{chooses}) such that the function $\sigma_k(t)$ 
defined in \eqref{sigmak} satisfies the following estimate from above 
for all $t\in [0,\frac{\varrho_k}{n^{p-1}}]$:
\beqs\label{formulaabove}
\sigma_k(t)\leq C \varrho_k^{\frac 12+2+aq},\qquad k\gg1,
\eeqs    
for some $C>0$.
\end{Prop}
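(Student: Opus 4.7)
The plan is to obtain the upper bound on $\sigma_k(t)$ as an almost immediate consequence of the already-established $L^2$-norm estimate \eqref{normevkab} for the individual localized functions $v_k^{\alpha,\beta}$, combined with the scaling choice \eqref{sceltamu} which forces $a > \mu+1$.

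First, I would recall that by \eqref{normevkab}, for every $\alpha,\beta \in \mathbb{N}_0$ and every $t \in [0, \varrho_k/n^{p-1}]$ one has
\[
\|v_k^{\alpha,\beta}(t,\cdot)\| \leq C_{\alpha,\beta}\, \varrho_k^{\frac{1}{2}+2}\, n^q = C_{\alpha,\beta}\, \varrho_k^{\frac{1}{2}+2+aq},
\]
since $n = \varrho_k^{a}$. The point is that this bound is uniform on the time interval of interest and does not blow up in $\alpha,\beta$ worse than a constant $C_{\alpha,\beta}$.

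Next, I would exploit the geometric weight in the definition \eqref{sigmak} of $\sigma_k$. Because $n = \varrho_k^{a}$ with $a > \mu+1$ by \eqref{sceltamu}, we have
\[
\frac{\varrho_k^{\mu+1}}{n} = \varrho_k^{\mu+1-a} \longrightarrow 0 \quad \text{as } k \to \infty,
\]
so in particular $\varrho_k^{\mu+1}/n \leq 1$ for $k$ large enough, and then $(\varrho_k^{\mu+1}/n)^{\alpha+\beta} \leq 1$ for every $\alpha+\beta \geq 0$ in the finite range $0 \leq \alpha+\beta \leq s$.

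Finally, I would sum the trivial bounds term by term:
\[
\sigma_k(t) = \sum_{0 \leq \alpha+\beta \leq s} \Big(\frac{\varrho_k^{\mu+1}}{n}\Big)^{\alpha+\beta} \|v_k^{\alpha,\beta}(t,\cdot)\| \leq \sum_{0 \leq \alpha+\beta \leq s} C_{\alpha,\beta}\, \varrho_k^{\frac{1}{2}+2+aq} = C\, \varrho_k^{\frac{1}{2}+2+aq},
\]
for all $t \in [0, \varrho_k/n^{p-1}]$ and $k$ large enough, with $C := \sum_{\alpha+\beta \leq s} C_{\alpha,\beta}$ (a finite sum since $s$ is fixed in \eqref{chooses}). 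There is no genuine obstacle here: all the delicate work was carried out in Section~\ref{sec2} to produce \eqref{normevkab}, and the role of the weight $(\varrho_k^{\mu+1}/n)^{\alpha+\beta}$ in $\sigma_k$ together with the constraint $a > \mu+1$ from \eqref{sceltamu} ensures that the weighted sum does not inflate the polynomial order in $\varrho_k$ beyond the single-term bound.
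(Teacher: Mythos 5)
Your proof is correct and follows essentially the same route as the paper: both apply the uniform bound \eqref{normevkab} to each term of the finite sum defining $\sigma_k(t)$ and use that $\varrho_k^{\mu+1}/n=\varrho_k^{\mu+1-a}\leq 1$ for large $k$ (since $a>\mu+1$ by \eqref{sceltamu}) so the weights do not inflate the bound. Your write-up merely makes explicit the smallness of the weight, which the paper leaves implicit.
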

\begin{proof}
The estimate \eqref{normevkab} obtained in Section \ref{sec2} 
and definition \eqref{sigmak} immediately give:
\beqsn
\sigma_k(t)\leq \ds\sum_{0\leq\alpha+\beta\leq s}C_{\alpha,\beta}
\left(\frac{\varrho_k^{\mu+1}}{n}
\right)^{\alpha+\beta}\varrho_k^{\frac 12+2+aq}\leq C\varrho_k^{\frac 12+2+aq}
\eeqsn
for some $C>0$, since $s$ has been fixed in \eqref{chooses}.
\end{proof}

We are now ready to give the proof of Theorem \ref{th1}.
\begin{proof}[Proof of Theorem \ref{th1}]
Let us assume, by contradiction, that the Cauchy problem \eqref{CP1} is
well-posed in $H^\infty$ but \eqref{CN2} does not hold true.
Then at least one of the two conditions \eqref{A} or \eqref{B}
does not hold true.
As we remarked in Section~\ref{sec2}, we can assume, 
without loss of generality, that \eqref{A} does not hold and apply
Lemma \ref{lemma1}. By Propositions \ref{prop35} and \ref{prop41} we 
come to the estimate:
\[c (1+\varrho_k)^{M}\leq \sigma_k\left(\frac{\varrho_k}{n^{p-1}}\right)
\leq C\varrho_k^{\frac 12+2+aq},\]
for positive constants $c,C$ not depending on $k$,
giving rise to a contradiction for $k$ large enough, if we choose
\beqsn
M>\frac 12+2+aq.
\eeqsn

Therefore condition \eqref{CN2} must be satisfied and the proof is complete.
\end{proof}

\appendix
\section{}
The localized pseudo-differential operators $W_{n,k}^{\alpha,\beta}(t,x,D_x)$ 
of the present paper have symbols $w_{n,k}^{\alpha,\beta}(t,x,\xi)$ depending 
on the parameter $t$ and belonging to the class $S^{0}_{0,0}$ 
of all functions $p(x,\xi)\in C^\infty(\R^2)$ such that for every 
$\alpha,\beta\geq 0$ 
\beqs
\label{classe000}
\vert D_x^\beta\partial_\xi^\alpha p(x,\xi)\vert\leq C_{\alpha,\beta};
\eeqs
$S^{0}_{0,0}$ is a Fr\'echet space with semi-norms
\beqs
\label{semi000}
|p|_{\ell,\ell'}^{0}:=\max_{\alpha\leq\ell,\beta\leq \ell'}\sup_{x,\xi\in\R}\vert 
\partial_\xi^\alpha D_x^\beta p(x,\xi)\vert.
\eeqs
The class $S^0_{0,0}$ corresponds to the classical class $S^m_{\varrho,\delta}$ 
(defined by $\vert D_x^\beta\partial_\xi^\alpha p(x,\xi)\vert
\leq C_{\alpha,\beta}\langle\xi\rangle^{m-\varrho\alpha+\delta\beta}$ 
instead of \eqref{classe000}; see \cite{KG})
with 
$m=\varrho=\delta=0$. In the 
$S^m_{0,0}$ classes
the usual asymptotic expansion formula 
\[p(x,\xi)\sim\ds\sum_{\alpha\geq 0}\frac1{\alpha!}\partial_\xi^\alpha 
p_1(x,\xi)D_x^\beta p_2(x,\xi)\]
fails to be true, and we need to use the expansion formula with a 
remainder, as in \cite[Thm. 3.1, Chap. 2]{KG} (see also \cite[Thm. A]{I1}):
\begin{Th}
\label{thA}
Let $P_j(x,D_x)$ be pseudo-differential operators with symbols 
$p_j(x,\xi)\in S^{m_j}_{0,0}$, $j=1,2$.
Then the operator $P(x,D_x)=P_1(x,D_x)\circ P_2(x,D_x)$ has symbol 
given by the oscillatory integral
$$p(x,\xi)=\ds\int\!\!\int e^{-iy\eta}p_1(x,\xi+\eta)
p_2(x+y,\xi)dy\dbar\eta\in S^{m_1+m_2}_{0,0},$$
where $\dbar\eta=(2\pi)^{-1}d\eta$.

Moreover, the following expansion formula holds for every $\nu\in\N$:
\beqsn
p(x,\xi)=\ds\sum_{\alpha\leq\nu-1}\frac1{\alpha!}
\partial_\xi^\alpha p_1(x,\xi)D_x^\beta p_2(x,\xi)+\ds
\int_0^1\frac{(1-\theta)^{\nu-1}}{(\nu-1)!}r_{\theta,\nu}(x,\xi)d\theta,
\eeqsn
where
\beqsn
r_{\theta,\nu}(x,\xi):=\ds\int\!\!\int e^{-iy\eta}\partial_\xi^\nu 
p_1(x,\xi+\theta\eta)D_x^\nu p_2(x+y,\xi)dy\dbar\eta\in S^{m_1+m_2}_{0,0}.
\eeqsn
\end{Th}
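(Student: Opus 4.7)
The plan is to use the already-established $L^2$ estimate \eqref{normevkab} for each of the functions $v_k^{\alpha,\beta}$, together with the fact that the weights $\bigl(\varrho_k^{\mu+1}/n\bigr)^{\alpha+\beta}$ appearing in the definition \eqref{sigmak} of $\sigma_k(t)$ are harmless for our range of parameters. All the heavy lifting has been done in Section~\ref{sec2} (Calder\'on--Vaillancourt applied to Lemma~\ref{lemmaAB1}$(ii)$ and the energy bound \eqref{C'T} for $u_k$), so at this stage only bookkeeping remains.

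First, I would recall the uniform bound \eqref{normevkab}, namely
\[
\|v_k^{\alpha,\beta}(t,\cdot)\|\leq C_{\alpha,\beta}\,\varrho_k^{\frac12+2}\,n^{q}
=C_{\alpha,\beta}\,\varrho_k^{\frac12+2+aq},
\qquad t\in\bigl[0,\varrho_k/n^{p-1}\bigr],
\]
valid for every fixed $(\alpha,\beta)\in\N_0^2$ and all sufficiently large $k$. Inserting this into \eqref{sigmak} yields
\[
\sigma_k(t)\leq \sum_{0\leq \alpha+\beta\leq s}
C_{\alpha,\beta}\left(\frac{\varrho_k^{\mu+1}}{n}\right)^{\alpha+\beta}
\varrho_k^{\frac12+2+aq}.
\]

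Next I would exploit \eqref{sceltamu}. Since $n=\varrho_k^{a}$ with $a>\mu+1$, we have
\[
\frac{\varrho_k^{\mu+1}}{n}=\varrho_k^{\mu+1-a}\longrightarrow 0
\quad\text{as } k\to\infty,
\]
so in particular $\varrho_k^{\mu+1}/n\leq 1$ for $k$ large enough. Hence each weight $\bigl(\varrho_k^{\mu+1}/n\bigr)^{\alpha+\beta}$ is bounded by $1$ uniformly in $k$. Because $s$ has already been fixed in \eqref{chooses} (it depends only on $a,\mu,p,q$ but not on $k$), the number of summands is finite and independent of $k$; summing the finitely many constants $C_{\alpha,\beta}$ into a single $C>0$ gives
\[
\sigma_k(t)\leq C\,\varrho_k^{\frac12+2+aq},\qquad t\in\bigl[0,\varrho_k/n^{p-1}\bigr],
\]
which is exactly \eqref{formulaabove}.

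There is no real obstacle: the only point that requires attention is verifying that the parameter constraints \eqref{sceltamu} and \eqref{chooses} are strong enough to make each weight $\bigl(\varrho_k^{\mu+1}/n\bigr)^{\alpha+\beta}\leq 1$ and keep $s$ finite simultaneously, but both follow directly from $a>\mu+1$. The whole proposition is essentially a one-line consequence of \eqref{normevkab} once the choices of $a,\mu,s$ from Section~\ref{sec3} are in force.
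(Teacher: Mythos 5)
Your proposal does not prove the statement it is supposed to prove. The statement is Theorem~\ref{thA} of the Appendix: the composition theorem for pseudo-differential operators with symbols in $S^{m_j}_{0,0}$, i.e.\ that the symbol of $P_1\circ P_2$ is the oscillatory integral $p(x,\xi)=\int\!\!\int e^{-iy\eta}p_1(x,\xi+\eta)p_2(x+y,\xi)\,dy\,\dbar\eta\in S^{m_1+m_2}_{0,0}$, together with the finite Taylor-type expansion with the explicit integral remainder $r_{\theta,\nu}$. What you wrote is instead an argument for Proposition~\ref{prop41} (the estimate from above \eqref{formulaabove} for $\sigma_k(t)$): you invoke \eqref{normevkab}, the weights $(\varrho_k^{\mu+1}/n)^{\alpha+\beta}$ from \eqref{sigmak}, and the parameter choices \eqref{sceltamu}, \eqref{chooses}. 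That argument is fine for that proposition (and coincides with the paper's own short proof of it), but it has no bearing on Theorem~\ref{thA}: nothing in it concerns the composition of two pseudo-differential operators, the oscillatory-integral representation of the composed symbol, or the remainder term.

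A proof of the actual statement would have to proceed along completely different lines: write the symbol of $P_1\circ P_2$ as the oscillatory integral above (justified by regularization, since the symbols are only in $S^{m_j}_{0,0}$), apply Taylor's formula with integral remainder to $\eta\mapsto p_1(x,\xi+\eta)$ up to order $\nu$, convert the powers $\eta^\alpha$ into $x$-derivatives of $p_2$ via $\eta^\alpha e^{-iy\eta}=D_y^\alpha e^{-iy\eta}$ and integration by parts, use $\int\!\!\int e^{-iy\eta}q(x+y)\,dy\,\dbar\eta=q(x)$ to identify the terms $\frac1{\alpha!}\partial_\xi^\alpha p_1\,D_x^\alpha p_2$, and finally show that the remainder oscillatory integral $r_{\theta,\nu}$ lies in $S^{m_1+m_2}_{0,0}$ uniformly in $\theta\in[0,1]$, which requires oscillatory-integral estimates of the type recorded in Theorem~\ref{thB}. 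None of these steps appear in your proposal, so as a proof of Theorem~\ref{thA} it is entirely missing; note also that the paper itself does not prove this theorem but quotes it from Kumano-Go and Ichinose, so the expected solution is precisely the classical argument sketched above.
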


We recall from \cite[Lemm 2.2, Chap. 7]{KG}, (see also \cite[Thm. B]{I1}):
\begin{Th}
\label{thB}
Let $p_j(x,\xi)\in S^0_{0,0}$ for $j=1,2$ and define
\beqsn
p_\theta(x,\xi):=\ds\int\!\!\int e^{-iy\eta}p_1(x,\xi+\theta\eta)
p_2(x+y,\xi)dy\dbar\eta.
\eeqsn
Then for every $\ell\in\N_0$ there exists a constant $C_\ell>0$ 
such that
\beqsn
|p_\theta|_{\ell,\ell}^{0}\leq C_\ell|p_1|_{\ell+2,\ell+2}^{0}
|p_2|_{\ell+2,\ell+2}^{0}
\eeqsn
for all $\theta\in[0,1]$.
\end{Th}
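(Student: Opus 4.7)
\textbf{Proof plan for Theorem \ref{thB}.} The integral defining $p_\theta$ is not absolutely convergent on $\R^2_{y,\eta}$: it is a genuine oscillatory integral and must be treated as such. The plan is the standard one: regularize, integrate by parts twice using the identities
\[
(1-\partial_y^2)\,e^{-iy\eta}=(1+\eta^2)\,e^{-iy\eta},\qquad
(1-\partial_\eta^2)\,e^{-iy\eta}=(1+y^2)\,e^{-iy\eta},
\]
to transfer derivatives from the phase onto $p_1$ and $p_2$, and then bound the resulting absolutely convergent integral by the semi-norms of $p_1$ and $p_2$.

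\textbf{Case $\ell=0$.} After introducing a standard cut-off $\chi_\varepsilon(y,\eta)=\chi(\varepsilon y,\varepsilon\eta)$ with $\chi\in C_c^\infty(\R^2)$, $\chi(0,0)=1$, and passing to the limit $\varepsilon\to 0^+$, integration by parts (twice in $y$, twice in $\eta$) yields the representation
\[
p_\theta(x,\xi)=\iint \frac{e^{-iy\eta}}{(1+\eta^2)(1+y^2)}\Bigl[(1-\theta^2\partial_\xi^2)p_1\Bigr](x,\xi+\theta\eta)\cdot\Bigl[(1-\partial_x^2)p_2\Bigr](x+y,\xi)\,dy\,\dbar\eta,
\]
since $\partial_y\bigl[p_2(x+y,\xi)\bigr]=(\partial_x p_2)(x+y,\xi)$ and $\partial_\eta\bigl[p_1(x,\xi+\theta\eta)\bigr]=\theta(\partial_\xi p_1)(x,\xi+\theta\eta)$. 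The integrand is pointwise bounded by $|p_1|_{2,2}^{0}\,|p_2|_{2,2}^{0}\,(1+\eta^2)^{-1}(1+y^2)^{-1}$, and the factor $\theta^2\le 1$ keeps the bound uniform in $\theta\in[0,1]$. Since $\iint(1+\eta^2)^{-1}(1+y^2)^{-1}\,dy\,d\eta$ is finite, this gives $|p_\theta(x,\xi)|\le C_0\,|p_1|_{2,2}^{0}\,|p_2|_{2,2}^{0}$.

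\textbf{Case $\ell\ge 1$ and conclusion.} For $\alpha,\beta\le\ell$, I would differentiate under the integral sign (justified by the regularization above) and use Leibniz:
\[
\partial_\xi^\alpha D_x^\beta p_\theta(x,\xi)=\!\!\!\sum_{\substack{\alpha_1+\alpha_2=\alpha\\ \beta_1+\beta_2=\beta}}\!\!\binom{\alpha}{\alpha_1}\!\binom{\beta}{\beta_1}\!\iint e^{-iy\eta}(\partial_\xi^{\alpha_1}D_x^{\beta_1}p_1)(x,\xi+\theta\eta)(\partial_\xi^{\alpha_2}D_x^{\beta_2}p_2)(x+y,\xi)\,dy\,\dbar\eta,
\]
with no $\theta$-factor (differentiating in $\xi$, not in $\eta$). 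Applying the same two-fold integration by parts to each term gives the bound
\[
\bigl|\partial_\xi^\alpha D_x^\beta p_\theta(x,\xi)\bigr|\le C_{\alpha,\beta}\sum_{\substack{\alpha_1+\alpha_2=\alpha\\ \beta_1+\beta_2=\beta}}|p_1|_{\alpha_1+2,\beta_1+2}^{0}\,|p_2|_{\alpha_2+2,\beta_2+2}^{0}\le C_\ell\,|p_1|_{\ell+2,\ell+2}^{0}\,|p_2|_{\ell+2,\ell+2}^{0},
\]
uniformly in $(x,\xi)$ and $\theta\in[0,1]$. Taking the supremum over $\alpha,\beta\le\ell$ yields the claimed estimate.

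\textbf{Main obstacle.} The only nontrivial point is making the oscillatory integral rigorous and justifying the integration by parts: one must verify that the regularized integrals $p_\theta^\varepsilon$ converge to a limit independent of the choice of $\chi$, that integration by parts produces no boundary contribution in the limit, and that dominated convergence can be applied to pass from $\varepsilon>0$ to the final absolutely convergent representation. Once this framework is in place (it is essentially the content of the oscillatory integral calculus in \cite{KG}), the estimate is just a matter of counting derivatives, and the loss ``$\ell+2$'' on the right is exactly the two derivatives consumed by each of the two integrations by parts needed to integrate on $\R_y\times\R_\eta$.
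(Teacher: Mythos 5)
The paper does not prove Theorem \ref{thB} at all: it is recalled verbatim from \cite[Lemma 2.2, Chap.~7]{KG} (see also \cite[Thm.~B]{I1}), so there is no in-paper argument to compare against. Your sketch is the standard textbook proof of that cited lemma, and it is essentially correct: regularize the oscillatory integral, integrate by parts twice in $y$ and twice in $\eta$ to produce the integrable weight $(1+y^2)^{-1}(1+\eta^2)^{-1}$ (this is exactly where the loss $\ell\mapsto\ell+2$ comes from, since $y,\eta\in\R$ are one-dimensional), and reduce $\ell\ge1$ to $\ell=0$ by differentiating under the integral and Leibniz, which works because $\partial_\xi,\,D_x$ applied to $p_1(x,\xi+\theta\eta)$ and $p_2(x+y,\xi)$ just replace $p_1,p_2$ by their derivatives without disturbing the oscillatory structure; the bound is uniform in $\theta\in[0,1]$ as you say.

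One small inexactness: the displayed representation
\[
p_\theta(x,\xi)=\iint \frac{e^{-iy\eta}}{(1+\eta^2)(1+y^2)}\bigl[(1-\theta^2\partial_\xi^2)p_1\bigr](x,\xi+\theta\eta)\,\bigl[(1-\partial_x^2)p_2\bigr](x+y,\xi)\,dy\,\dbar\eta
\]
is not an identity. Whichever integration by parts you perform second, its derivatives also fall on the weight created by the first one (e.g.\ $(1-\partial_\eta^2)$ acting on $(1+\eta^2)^{-1}p_1(x,\xi+\theta\eta)$ produces Leibniz cross terms involving $\partial_\eta^j(1+\eta^2)^{-1}$), since multiplication by $(1+\eta^2)$ does not commute with $\partial_\eta$. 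These extra terms are harmless — $|\partial_\eta^j(1+\eta^2)^{-1}|\le C(1+\eta^2)^{-1}$ and they involve at most two $\xi$-derivatives of $p_1$ — so the bound $C\,|p_1|^0_{2,2}|p_2|^0_{2,2}$ and hence the stated estimate survive unchanged; you should just replace the clean tensor-product formula by its Leibniz-expanded version (or estimate termwise as in \cite{KG}). With that correction, and the standard justification of the regularization/limit that you already flag, the argument is complete.
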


We conclude the appendix with the statement of the
Calder\'on-Vaillancourt's Theorem about continuity of 
pseudo-differential operators with symbols in the class $S^0_{0,0}$ 
acting on  $L^2$ (see \cite{CV} or \cite[Thm. C]{I2}):
\begin{Th}
\label{thC}
Let $p(x,D_x)$ be a pseudo-differential operator with symbol 
$p(x,\xi)\in S^0_{0,0}$. Then:
\beqsn
\|p(x,D_x)u\|\leq C \vert p\vert^0_{2,2}\ \|u\|
\eeqsn
for all $u\in L^2$, with a positive constant $C$ independent of $p$ and $u$. 
\end{Th}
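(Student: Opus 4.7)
The plan is to prove this $L^2$-continuity statement (the one-dimensional Calderón-Vaillancourt theorem for symbols in $S^0_{0,0}$) by combining a phase-space partition of unity with the Cotlar-Stein almost-orthogonality lemma, following the classical strategy in \cite{KG}. Fix a cutoff $\varphi\in C_c^\infty(\R)$ with $\sum_{j\in\mathbb Z}\varphi(x-j)^2=1$ and set
\[
p_{j,k}(x,\xi):=p(x,\xi)\,\varphi(x-j)\,\varphi(\xi-k),\qquad (j,k)\in\mathbb Z^2,
\]
so that $p=\sum_{j,k}p_{j,k}$ with the sum locally finite. Denoting by $P_{j,k}$ the operator with symbol $p_{j,k}$, it suffices to prove $\|\sum_{j,k}P_{j,k}u\|\leq C|p|^0_{2,2}\,\|u\|$ for $u\in\mathcal S$ and then extend by density.

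For each piece, $p_{j,k}$ is supported in the unit box $[j-1,j+1]\times[k-1,k+1]$ and satisfies $|\partial_\xi^\alpha\partial_x^\beta p_{j,k}|\leq C|p|^0_{2,2}$ for $\alpha,\beta\leq 2$. Writing the kernel $K_{j,k}(x,y)=\int e^{i(x-y)\xi}p_{j,k}(x,\xi)\,\dbar\xi$ and integrating by parts twice via $(1+(x-y)^2)e^{i(x-y)\xi}=(1-\partial_\xi^2)e^{i(x-y)\xi}$ yields
\[
|K_{j,k}(x,y)|\leq C|p|^0_{2,2}\,\varphi(x-j)\,(1+(x-y)^2)^{-1},
\]
whence Schur's test delivers the uniform bound $\|P_{j,k}\|_{L^2\to L^2}\leq C|p|^0_{2,2}$.

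The decisive step is almost-orthogonality. Using Theorem \ref{thA}, the symbol of $P_{j,k}^*P_{j',k'}$ (and similarly of $P_{j,k}P_{j',k'}^*$) is an oscillatory integral of the form $\int\!\!\int e^{-iy\eta}\overline{p_{j,k}}(x+y,\xi+\eta)p_{j',k'}(x+y,\xi)\,dy\,\dbar\eta$, whose amplitude is nonzero only when $|x-j|\lesssim 1$, $|\xi-k'|\lesssim 1$, $|x+y-j'|\lesssim 1$ and $|\xi+\eta-k|\lesssim 1$; this forces $|y|\asymp|j-j'|$ and $|\eta|\asymp|k-k'|$ on the effective support. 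Integrating by parts twice in $\eta$ through $(1-\partial_\eta^2)e^{-iy\eta}=(1+y^2)e^{-iy\eta}$ and twice in $y$ through $(1-\partial_y^2)e^{-iy\eta}=(1+\eta^2)e^{-iy\eta}$, and controlling the resulting symbols by Theorem \ref{thB}, produces
\[
\|P_{j,k}^*P_{j',k'}\|+\|P_{j,k}P_{j',k'}^*\|\leq C(|p|^0_{2,2})^2\,\langle j-j'\rangle^{-2}\langle k-k'\rangle^{-2}.
\]
The square root of the right-hand side is summable over $(j',k')\in\mathbb Z^2$ uniformly in $(j,k)$, so Cotlar's lemma yields strong convergence of $\sum_{j,k}P_{j,k}$ with $\|\sum_{j,k}P_{j,k}\|_{L^2\to L^2}\leq C|p|^0_{2,2}$, which is the assertion.

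The main obstacle is the almost-orthogonality bound: the index $2$ in $|p|^0_{2,2}$ is exactly the regularity budget that the integration-by-parts argument can consume, and one must verify that two integrations by parts in each of the $y$- and $\eta$-variables simultaneously (a) stay within this budget after applying Theorem \ref{thB} and (b) yield the exponent $2$ in $\langle j-j'\rangle^{-2}\langle k-k'\rangle^{-2}$, which is the minimum needed for the square-root series to converge on the two-dimensional phase-space lattice $\mathbb Z^2$. Any loss of a derivative or of a power of the decay would break the Cotlar-Stein summability, so the counting must be carried out with care.
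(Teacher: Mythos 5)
The paper does not actually prove Theorem \ref{thC}: it is quoted from \cite{CV} and \cite[Thm. C]{I2}, so your argument has to stand on its own, and as written it has a genuine gap at the decisive step. From a bound $\|P_{j,k}^*P_{j',k'}\|+\|P_{j,k}P_{j',k'}^*\|\leq C(|p|^0_{2,2})^2\langle j-j'\rangle^{-2}\langle k-k'\rangle^{-2}$ the Cotlar--Stein lemma needs the square roots $\langle j-j'\rangle^{-1}\langle k-k'\rangle^{-1}$ to be summable over $(j',k')\in\mathbb{Z}^2$ uniformly in $(j,k)$; they are not, since already $\sum_{m\in\mathbb{Z}}\langle m\rangle^{-1}$ diverges. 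Your remark that the exponent $2$ is ``the minimum needed'' is precisely the miscount: one needs decay strictly better than $\langle\cdot\rangle^{-2}$ in each index before taking square roots (in practice $\langle\cdot\rangle^{-3}$ or $\langle\cdot\rangle^{-4}$). Moreover the structure of the gain is not the one you describe. Since $p_{j,k}$ has $x$-support in $[j-1,j+1]$, the operator $P_{j,k}^*P_{j',k'}$ vanishes identically for $|j-j'|\geq 3$ (and $P_{j,k}P_{j',k'}^*$ vanishes for $|k-k'|\geq 3$ by disjointness of the frequency bands); the support condition in your oscillatory integral forces $|j-j'|\lesssim 1$, not $|y|\asymp|j-j'|$, so there is no mechanism producing a factor $\langle j-j'\rangle^{-2}$, nor is one needed. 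The only nontrivial decay is in the complementary index, gained by integration by parts in the spatial variable, each step costing one $x$-derivative of $p$ (Leibniz can load all derivatives on one factor). With the budget $\beta\leq 2$ imposed by $|p|^0_{2,2}$ you can therefore reach at most $\langle k-k'\rangle^{-2}$ (resp. $\langle j-j'\rangle^{-2}$), whose square root is again non-summable over the remaining one-dimensional lattice. So the argument does not close, and no more careful bookkeeping within the $(2,2)$ budget can make it close.

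Two repairs are possible, and they are not equivalent. If you allow three or four integrations by parts, the Cotlar sums converge and you obtain the $L^2$ bound, but with $|p|^0_{3,3}$ or $|p|^0_{4,4}$ in place of $|p|^0_{2,2}$ --- a weaker statement than the one in the appendix (still adequate for this paper's applications, since the lemmas there control all seminorms of the localized symbols). To keep the $(2,2)$ seminorm one needs one of the sharper classical arguments: Calder\'on--Vaillancourt's original device of estimating iterated powers $(T^*T)^m$ through kernel bounds, Cordes' decomposition, or Hwang's integration-by-parts proof; see \cite{CV} and \cite{KG}. A further small slip: with $\sum_{j}\varphi(x-j)^2=1$ you do not have $p=\sum_{j,k}p_{j,k}$ for $p_{j,k}=p\,\varphi(x-j)\varphi(\xi-k)$; take a partition with $\sum_j\varphi(\cdot-j)=1$ (or define the pieces with $\varphi^2$) so that the decomposition of the symbol is exact.
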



{\bf Aknowledgment.}
The first two authors were partially supported by the INdAM-GNAMPA
Project 2015 ``Equazioni Differenziali a Derivate Parziali di
Evoluzione e Stocastiche''.

\end{document}